\newcounter{reminder}
\newcommand{\reminder}[1]{\marginpar{\stepcounter{reminder}\textcolor{red}{Note~\arabic{reminder}}}$\langle$~{\sf #1 }~$\rangle$}
\newtheorem{theorem}{Theorem}[section]
\newtheorem{lemma}[theorem]{Lemma}
\newtheorem{proposition}[theorem]{Proposition}
\newtheorem{corollary}[theorem]{Corollary}
\newtheorem{definition}[theorem]{Definition}
\newcommand{\calP}{\mathcal P}
\newcommand{\eps}{\epsilon}
\newcommand{\mix}{\diamond}
\newcommand{\comix}{\boxempty}
\theoremstyle{remark}
\title{The ranks of alternating string C-groups}
\author{Mark Mixer}
\begin{document}

\maketitle

\begin{abstract}
	
		In this paper, string C-groups of all ranks $3 \leq r <  \frac{n}{2}$ are provided for each alternating group $A_n $, $n \geq 12$.  As the string C-group representations of $A_n$ have also been classified for $n \leq 11$, and it is known that larger ranks are impossible, this paper provides the exact values of $n$ for which $A_n$ can be represented as a string C-group of a fixed rank.
		\end{abstract}

\section{Introduction}

In Problem 32 of~\cite{polytope-problems}, Hartley asks ``Find regular, chiral, or other polytopes whose automorphism
groups are alternating groups $A_n$. In particular, given a rank $r$, for which $n$ does
$A_n$ occur as the automorphism group of a regular or chiral polytope of rank $r$?"
In~\cite{HighestAn}, the maximum achievable rank for each group $A_n$ was found in the regular case.
In this paper we finish the solution to Hartley's question in the regular case, by finding string C-groups (regular polytopes) of all achievable ranks $r$ for each $n$.

The paper is organized as follows. In section~\ref{sec2} we briefly outline any necessary definitions and background.   Sections~\ref{sec3} contains many families of string C-groups that are needed to prove the main theorem. Sections~\ref{sec4} and \ref{sec5} consider string C-groups of rank at least seven, for odd and even $n$ respectively.  In Section~\ref{sec6}, we provide the string C-groups with ranks less than or equal to six for all possible $n$.  Finally, in section~\ref{sec7}, we summarize the main theorem.


\section{Background and Basic Notions \label{sec2}}

The automorphism group of an abstract regular polytope, along with a distinguished set of generators $\{ \rho_0,\ldots,\rho_{r-1} \}$, is called a rank $r$ string C-group.  In general, we say that a group $\Gamma$ is a rank $r$ {\em string group generated by involutions} (or an {\em sggi} for short) if $\Gamma$ is generated by $\{ \rho_0,\ldots,\rho_{r-1} \}$ which satisfy the following conditions.  
\begin{equation}
\label{coxrel}
(\rho_i \rho_j)^{p_{ij}}=\eps \quad (0\leq i,j \leq r-1),
\end{equation}
where $p_{ii}=1$ for all $i$, $2 \leq p_{ji} = p_{ij} $ if $j=i-1$, and  
\begin{equation}
\label{string}
p_{ij}=2 \textrm{ for } |i-j| \geq 2.
\end{equation}

Moreover, if $\Gamma$ has the following \textit{intersection property}, then it is considered to be a {\em string C-group}.
\begin{equation}
\label{intprop}
\langle \rho_i \mid i\in I \rangle \cap \langle \rho_i \mid i\in J \rangle 
= \langle \rho_i \mid i\in  I  \cap J \rangle \,\textrm{ for }\, I,J \subseteq \{0,\ldots,r-1\}
\end{equation}
The automorphism group $\Gamma( \calP )$ of an abstract regular polytope $\calP$ is a string C-group, and conversely, it is known (see \cite[Sec. 2E]{arp}) that an abstract regular $n$-polytope can be constructed uniquely from any string C-group.  

We will often use the fact that not all of the intersections from Equation~\ref{intprop} need to be verified.  

\begin{proposition} \label{lem:max}
Let $\Gamma$ be a rank $r$ string group generated by involutions, and suppose that $\Gamma_0$ and $\Gamma_{r-1}$ are both string C-groups. If $\Gamma_0 \cap \Gamma_{r-1} = \Gamma_{0,r-1}$, then $\Gamma$ is a string C-group.  Moreover, if $\Gamma_{0,r-1}$ is a maximal subgroup of either $\Gamma_0$ or $\Gamma_{r-1}$ then this condition is satisfied.
\end{proposition}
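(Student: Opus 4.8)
The plan is to verify the intersection property~(\ref{intprop}) directly. Throughout write $N=\{0,\dots,r-1\}$ and, for $K\subseteq N$, put $\langle K\rangle:=\langle \rho_i\mid i\in K\rangle$; thus $\Gamma_0=\langle N\setminus\{0\}\rangle$, $\Gamma_{r-1}=\langle N\setminus\{r-1\}\rangle$ and $\Gamma_{0,r-1}=\langle N\setminus\{0,r-1\}\rangle$. For any $I,J$ the inclusion $\langle I\cap J\rangle\subseteq\langle I\rangle\cap\langle J\rangle$ is immediate, so the whole content is the reverse inclusion $\langle I\rangle\cap\langle J\rangle\subseteq\langle I\cap J\rangle$, which I would establish by induction on $r$ (the cases $r\le 2$ being vacuous).

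First I would record that the hypotheses already force \emph{every} proper parabolic $\langle K\rangle$, $K\subsetneq N$, to be a string C-group. Indeed, since $\rho_i\rho_j=\rho_j\rho_i$ whenever $|i-j|\ge 2$ by~(\ref{string}), the subgroup $\langle K\rangle$ is the direct product of the subgroups attached to the maximal runs of consecutive integers in $K$; because $K\ne N$ there is a gap, so each run omits $0$ or omits $r-1$, hence lies in the C-group $\Gamma_0$ or $\Gamma_{r-1}$ and is itself a C-group, and a direct product of C-groups along a string is again a C-group. Consequently, whenever $I\cup J\ne N$ both $\langle I\rangle$ and $\langle J\rangle$ sit inside the proper parabolic $\langle I\cup J\rangle$, whose intersection property yields $\langle I\rangle\cap\langle J\rangle=\langle I\cap J\rangle$ at once. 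This reduces everything to the case $I\cup J=N$ with $I,J\subsetneq N$.

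For $I\cup J=N$ I would peel off an end node using the string relations together with the single hypothesis $\Gamma_0\cap\Gamma_{r-1}=\Gamma_{0,r-1}$. Suppose one of the sets, say $J$, omits $0$; then $\langle J\rangle\le\Gamma_0$, and it suffices to show $\langle I\rangle\cap\Gamma_0=\langle I\setminus\{0\}\rangle$, after which both factors lie in the C-group $\Gamma_0$ and its intersection property finishes the job. Here $0\in I$, and since $I\ne N$ the run of $I$ through $0$ is some $\{0,\dots,t\}$ with $t\le r-2$, so it lies in $\Gamma_{r-1}$ and commutes with the remaining runs, which lie in $\Gamma_0$; intersecting the run-subgroup with $\Gamma_0$ lands it in $\Gamma_0\cap\Gamma_{r-1}=\Gamma_{0,r-1}$, and the C-group property of $\Gamma_{r-1}$ then collapses it to $\langle\rho_1,\dots,\rho_t\rangle$, giving $\langle I\rangle\cap\Gamma_0=\langle I\setminus\{0\}\rangle$ as required; the symmetric argument applies if a set omits $r-1$. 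The genuinely delicate case, and the step I expect to be the main obstacle, is when both $I$ and $J$ contain $0$ \emph{and} $r-1$ while each omits an interior node, for then no single intersection with a facet strips an endpoint. Here I would again use commutation of the end generators to split off the runs of $I$ and $J$ through $r-1$ (or through $0$), match these splittings on the two sides, and invoke the inductive hypothesis on a group of smaller rank; making this reduction clean, rather than the earlier bookkeeping, is where the real work lies.

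Finally, for the ``moreover'' clause, one always has $\Gamma_{0,r-1}\subseteq\Gamma_0\cap\Gamma_{r-1}\subseteq\Gamma_0$, and $\Gamma_0\cap\Gamma_{r-1}\ne\Gamma_0$ because $\rho_{r-1}\in\Gamma_0\setminus\Gamma_{r-1}$, so $\Gamma_0\cap\Gamma_{r-1}$ is a proper subgroup of $\Gamma_0$. Thus if $\Gamma_{0,r-1}$ is maximal in $\Gamma_0$ the chain leaves no room strictly between its ends, forcing $\Gamma_0\cap\Gamma_{r-1}=\Gamma_{0,r-1}$; the case of maximality in $\Gamma_{r-1}$ is identical after interchanging the roles of $0$ and $r-1$. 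Hence the displayed intersection condition holds and the first part applies.
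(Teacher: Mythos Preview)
The paper does not prove this from scratch; it simply cites Proposition~2E16 of~\cite{arp} for the first assertion and Lemma~2.2 of~\cite{high-rank-alternating} for the maximality clause. Your argument for the maximality clause is correct and is the standard one.

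For the main assertion there are two gaps in your direct verification. The minor one is the claim that the run-subgroups of a proper parabolic $\langle K\rangle$ assemble into a \emph{direct} product and hence a C-group: in a bare sggi nothing yet rules out, say, $\rho_0=\rho_{r-1}$, so distinct run-subgroups could intersect nontrivially; this is exactly where the hypothesis $\Gamma_0\cap\Gamma_{r-1}=\Gamma_{0,r-1}$ must enter, and you do not invoke it at that point. The serious gap is the ``delicate case'' you yourself flag, where both $I$ and $J$ contain $0$ and $r-1$: your sketched reduction by matching end-runs does not obviously work (the runs of $I$ and of $J$ through $r-1$ may have different lengths), and you never exhibit a smaller-rank sggi satisfying the proposition's hypotheses to which the induction actually applies. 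Your lemma $\langle I\rangle\cap\Gamma_0=\langle I\setminus\{0\}\rangle$, however, is correct and is the real content: its mirror version gives $\Gamma_{r-1}\cap\langle\rho_k,\dots,\rho_{r-1}\rangle=\langle\rho_k,\dots,\rho_{r-2}\rangle$ for each $k$, which is precisely the hypothesis of part~(b) of Proposition~2E16 in~\cite{arp}; the cited proof of~(b) then finishes by a different induction that never meets your delicate case.
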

\begin{proof}
This combines Proposition 2E16 of~\cite{arp} and Lemma 2.2 of~\cite{high-rank-alternating}.
\end{proof}

Let $\Gamma=\langle\rho_0,\,\ldots,\,\rho_{r-1}\rangle$ be a string group generated by involutions acting as a permutation group on a set $\{1,\,\ldots,\,n\}$.
We can construct the {\em permutation representation graph} $\mathcal{X}$ of $\Gamma$ as the $r$-edge-labeled graph with $n$ vertices, and with a single $i$-edge $\{a,\,b\}$ whenever $a\rho_i=b$ with $a < b$.   When $\Gamma$ is a string C-group that acts faithfully on $\{1, \ldots, n\}$, the graph $\mathcal{X}$ is called a \emph{CPR graph}, as defined in \cite{DanielCPR}.

If $P$ and $Q$ are string C-groups, then we say that $P$ \emph{covers} $Q$ if there is a well-defined
surjective homomorphism from $P$ to $Q$ that respects the canonical generators. In other words,
if $P = \langle \rho_0, \ldots, \rho_{r-1} \rangle$ and $Q = \langle \rho_0', \ldots,
\rho_{r-1}' \rangle$, then $P$ covers $Q$ if there is a homomorphism that sends each $\rho_i$
to $\rho_i'$. 

Given string C-groups $P$ and $Q$, the \emph{mix} of $P$ and $Q$, denoted $P \mix
Q$, is the subgroup of the direct product $P \times Q$ that is generated by the
elements $(\rho_i, \rho_i')$. This group is the minimal string group generated by involutions that covers both $P$ and $Q$ - where again, we only consider homomorphisms that respect the generators; see \cite[Section 5]{mixing-and-monodromy} for more details.

It is possible to mix a rank $r$ string C-group $P$ with a rank $s$ string C-group $Q$.  In particular we often mix a string C-group with the automorphism group of an edge $e$ (which is a rank 1 regular polytope). To do so, we take
$e = \langle \rho_0, \ldots, \rho_{r-1} \rangle$ with defining relations $\rho_0^2 = \eps$ and
$\rho_i = \eps$ for $1 \leq i \leq r-1$, and then use the same definition as before.   In general, to mix two string C-groups of different ranks, we add trivial generators to the group of smaller rank.

The \emph{comix} of $P$ and $Q$, denoted $P \comix Q$, is the largest
string group generated by involutions that is covered by both $P$ and $Q$ \cite{var-gps}. A presentation for $P \comix Q$ can be obtained from that of $P$ by adding all of the relations of
$Q$, rewriting the relations to use the generators of $P$ instead.  The size of the comix of $P$ and $Q$ is the index of the mix in the full direct product.  Throughout the paper we will rely on the following results about mixing of string C-groups.

\begin{proposition}
\label{lem:MixMon}(Theorem 5.12 of~\cite{mixing-and-monodromy})
Suppose $P$ and $Q$ are rank $r$ string C-groups, and that $P_{r-1}$ covers $Q_{r-1}$ then $P \mix Q$ is a string C-group.
\end{proposition}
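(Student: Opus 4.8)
The plan is to induct on the rank $r$ and, at each step, to apply Proposition~\ref{lem:max} to $M := P \mix Q$ with its two end-subgroups $M_0 = \langle \sigma_1, \ldots, \sigma_{r-1} \rangle$ and $M_{r-1} = \langle \sigma_0, \ldots, \sigma_{r-2} \rangle$, where $\sigma_i = (\rho_i, \rho_i')$ denotes the $i$-th generator of $M$. First I would record that $M$ is automatically a string group generated by involutions, since each $\sigma_i$ inherits the involution relation and the non-adjacent commutativity relations from $P$ and $Q$ coordinatewise; the real content of the statement is therefore the intersection property. The base cases $r \leq 2$ are immediate, as the intersection property is vacuous there.

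The main tool I would isolate at the outset is the elementary fact that whenever a string C-group $A = \langle \alpha_i \rangle$ covers $B = \langle \beta_i \rangle$ through the generator-respecting surjection $\phi$, the map $a \mapsto (a, \phi(a))$ is an isomorphism from $A$ onto $A \mix B$; in particular, every element of $A \mix B$ has its second coordinate determined by its first. Applying this to the covering $\phi \colon P_{r-1} \to Q_{r-1}$ furnished by the hypothesis, I obtain $M_{r-1} = P_{r-1} \mix Q_{r-1} \cong P_{r-1}$, which is a string C-group because parabolic subgroups of string C-groups are again string C-groups. For the other end, I would note that the restriction of $\phi$ to $P_{0,r-1}$ is a covering onto $Q_{0,r-1}$, so the rank-$(r-1)$ string C-groups $P_0$ and $Q_0$ satisfy the hypothesis of the proposition with respect to their own top generator; hence $M_0 = P_0 \mix Q_0$ is a string C-group by the induction hypothesis.

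It then remains to verify $M_0 \cap M_{r-1} = M_{0,r-1}$, which I expect to be the crux. The inclusion $\supseteq$ is clear. For $\subseteq$, take $(p,q) \in M_0 \cap M_{r-1}$. Projecting onto the first coordinate of $M_0 \leq P_0 \times Q_0$ gives $p \in P_0$, while the identification $M_{r-1} \cong P_{r-1}$ forces $p \in P_{r-1}$ and $q = \phi(p)$. Thus $p \in P_0 \cap P_{r-1} = P_{0,r-1}$ by the intersection property of $P$. Since the restriction of $\phi$ to $P_{0,r-1}$ is a covering onto $Q_{0,r-1}$, the same elementary fact identifies $M_{0,r-1} = P_{0,r-1} \mix Q_{0,r-1}$ with $P_{0,r-1}$ via $p \mapsto (p, \phi(p))$; hence $(p,q) = (p, \phi(p)) \in M_{0,r-1}$, as required. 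With all three hypotheses of Proposition~\ref{lem:max} established, $M$ is a string C-group.

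The step I anticipate as the genuine obstacle is precisely this intersection computation, and it is worth emphasizing why the covering hypothesis is indispensable: it is exactly what rigidifies $M_{r-1}$ so that the second coordinate $q$ of any element is pinned down as $\phi(p)$. Without it, $M_{r-1}$ would be a genuine mix in which the two coordinates vary independently, and $M_0 \cap M_{r-1}$ could properly contain $M_{0,r-1}$, breaking the intersection property. The remainder of the argument is bookkeeping about how coverings restrict to parabolic subgroups and how those restrictions interact with the mix.
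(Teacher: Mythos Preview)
The paper does not supply its own proof of this proposition; it merely records it as a citation (Theorem~5.12 of~\cite{mixing-and-monodromy}). So there is nothing in the present paper to compare your argument against.

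That said, your proof is correct and self-contained. The key observations are all sound: the graph of a generator-respecting surjection $\phi\colon A\to B$ is exactly $A\mix B$, giving $M_{r-1}=P_{r-1}\mix Q_{r-1}\cong P_{r-1}$; the restriction of $\phi$ to $P_{0,r-1}$ is a covering of $Q_{0,r-1}$, so induction handles $M_0=P_0\mix Q_0$; and the intersection $M_0\cap M_{r-1}=M_{0,r-1}$ follows because membership in $M_{r-1}$ forces $q=\phi(p)$, whence $p\in P_0\cap P_{r-1}=P_{0,r-1}$ and $(p,\phi(p))\in M_{0,r-1}$. Proposition~\ref{lem:max} then finishes. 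This is essentially the standard argument and is in the spirit of the original source, though your write-up is perhaps more explicit about why the covering hypothesis is doing the work in the intersection step.
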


\begin{proposition}\label{lem:MixFacet}
(Theorem 7A7 of~\cite{arp})
If $P$ is a rank $r$ string C-group then $P \mix P_{r-1}$ is a string C-group.
\end{proposition}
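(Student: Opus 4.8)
The plan is to obtain this as an immediate special case of Proposition~\ref{lem:MixMon}. Following the convention set out above, I regard the facet subgroup $P_{r-1}=\langle \rho_0,\ldots,\rho_{r-2}\rangle$ as a rank $r$ string C-group $Q$ by adjoining a trivial generator in the last position, so that $Q=\langle \rho_0,\ldots,\rho_{r-2},\eps\rangle$. The key observation is that adding a trivial generator in position $r-1$ does not alter the subgroup generated by any index set, so the intersection property for $Q$ collapses to that of $P_{r-1}$; in particular $Q$ is a (degenerate) string C-group, and its own facet subgroup is $Q_{r-1}=\langle \rho_0,\ldots,\rho_{r-2}\rangle$.

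Now I would compare the two facet subgroups appearing in the hypothesis of Proposition~\ref{lem:MixMon}. The facet of $P$ is $P_{r-1}=\langle \rho_0,\ldots,\rho_{r-2}\rangle$, while the facet of $Q$ is $Q_{r-1}=\langle \rho_0,\ldots,\rho_{r-2}\rangle$: these are literally the same group on the same generators. Since every group covers itself via the identity homomorphism, which trivially respects the canonical generators, the condition ``$P_{r-1}$ covers $Q_{r-1}$'' is automatic. Proposition~\ref{lem:MixMon} then yields that $P \mix Q = P \mix P_{r-1}$ is a string C-group, which is exactly the claim. The only point requiring care is the bookkeeping around the trivial generator: one must confirm that $P \mix P_{r-1}$, as defined by mixing with the degenerate rank $r$ group $Q$, is precisely the subgroup of $P\times P_{r-1}$ generated by $(\rho_i,\rho_i)$ for $i\leq r-2$ together with $(\rho_{r-1},\eps)$, and that the facet computation $Q_{r-1}=P_{r-1}$ remains valid in this degenerate setting. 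Both are routine once the definition of the mix for groups of differing rank is unwound.

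If instead a self-contained argument not invoking Proposition~\ref{lem:MixMon} were wanted, I would set $\Gamma = P \mix P_{r-1}$ with generators $\sigma_i=(\rho_i,\rho_i)$ for $i\leq r-2$ and $\sigma_{r-1}=(\rho_{r-1},\eps)$, verify directly that $\Gamma$ is an sggi (the involution and commutativity relations hold coordinatewise), and then apply Proposition~\ref{lem:max}. In this route $\Gamma_{r-1}$ is the diagonal copy of $P_{r-1}$, hence a string C-group, and $\Gamma_0$ is identifiable with the mix of the vertex figure $P_0=\langle \rho_1,\ldots,\rho_{r-1}\rangle$ with its own facet subgroup $\langle \rho_1,\ldots,\rho_{r-2}\rangle$, which is a string C-group by induction on the rank (this being precisely the rank $r-1$ instance of the present statement). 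The hard part of this alternative is establishing the intersection equality $\Gamma_0\cap\Gamma_{r-1}=\Gamma_{0,r-1}$, since controlling how elements of the mix $\Gamma_0$ project into the diagonal subgroup $\Gamma_{r-1}$ requires understanding the fibre structure of the mix rather than reasoning in a single coordinate. For this reason I would favor the first, one-line derivation from Proposition~\ref{lem:MixMon}.
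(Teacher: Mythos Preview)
The paper does not give its own proof of this proposition; it simply cites Theorem~7A7 of~\cite{arp} and moves on. Your derivation from Proposition~\ref{lem:MixMon} is valid and fits neatly within the paper's own conventions: the paper explicitly allows padding a lower-rank string C-group with trivial generators when mixing, so taking $Q$ to be $P_{r-1}$ with $\rho_{r-1}'=\eps$ is legitimate, and then $Q_{r-1}=P_{r-1}$ makes the covering hypothesis trivial via the identity. The one point worth flagging is that Proposition~\ref{lem:MixMon} is stated for (non-degenerate) rank~$r$ string C-groups, so you are tacitly assuming that Theorem~5.12 of~\cite{mixing-and-monodromy} remains valid when one of the generators is the identity; it does (the argument there is purely combinatorial in the generators and does not use that each $\rho_i'$ has order exactly two), but this is a small step beyond the proposition as quoted here. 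Your fallback inductive route via Proposition~\ref{lem:max} is also sound in outline, and your assessment that the intersection $\Gamma_0\cap\Gamma_{r-1}=\Gamma_{0,r-1}$ is the nontrivial part of that approach is accurate.
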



\begin{definition}
Let $\Gamma = \langle \rho_0,\ldots,\rho_{r-1} \rangle$ be an sggi, and let $\tau$ be an involution in a supergroup of $\Gamma$ such that $\tau \not \in \Gamma$ and $\tau$ commutes with all of $\Gamma$.  For fixed $k$, we define the group $\Gamma^*= \langle \rho_i \tau^{\eta_i}\,|\, i\in \{0,\,\ldots,\,r-1\} \rangle$ where $\eta_i = 1$ if $i=k$ and 0 otherwise, the sesqui-extension of $\Gamma$ with respect to $\rho_k$ and $\tau$.
\end{definition}
A sesqui-extension of a group $\Gamma$, with respect to its first generator can be seen as a mix of $\Gamma$ with the automorphism group of an edge, and thus we have the following.

\begin{proposition}(Proposition 5.3 of~\cite{Alt2})
\label{se0}
If $\Psi$ is a sesqui-extension of a string C-group $\Gamma$ with respect to $\rho_0$, then  $\Psi$ is a string C-group.
\end{proposition}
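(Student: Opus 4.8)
The plan is to realise $\Psi$ as a mix and then feed the complementary pair $(\Psi_0,\Psi_{r-1})$ into Proposition~\ref{lem:max}, running an induction on the rank $r$.

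First I would record the ambient structure. Since $\tau$ is an involution centralizing $\Gamma$ with $\tau\notin\Gamma$, the subgroup $\langle\Gamma,\tau\rangle$ of the supergroup is the internal direct product $\Gamma\times\langle\tau\rangle$, and $\langle\tau\rangle\cong C_2$ may be identified with the automorphism group $e$ of an edge (first generator nontrivial, the rest trivial). Under this identification the generators $\rho_i\tau^{\eta_i}$ of $\Psi$ (with $\eta_0=1$, $\eta_i=0$ otherwise) are exactly the generators of the mix $\Gamma\mix e$, so $\Psi=\Gamma\mix e$. Checking that $\Psi$ is a string group generated by involutions is routine: each $\rho_i\tau^{\eta_i}$ is an involution, every commuting relation $(\rho_i\rho_j)^2=\eps$ with $|i-j|\ge 2$ survives because $\tau$ is central, and the only order that can change is that of $(\rho_0\tau)\rho_1$, which becomes $\mathrm{lcm}(2,p_{01})$; this still respects the string diagram. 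The whole content is therefore the intersection property.

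For the intersection property I would induct on $r$, the base cases $r\le 2$ being immediate since there $\Psi$ is isomorphic either to $\Gamma$ or to $\Gamma\times C_2$ and the property is checked by hand. For the inductive step I apply Proposition~\ref{lem:max} to $\Psi_0=\langle\rho_1,\dots,\rho_{r-1}\rangle$ and $\Psi_{r-1}=\langle\rho_0\tau,\rho_1,\dots,\rho_{r-2}\rangle$. Here $\Psi_0=\Gamma_0$ is a string C-group, as sections of the string C-group $\Gamma$ are string C-groups, while $\Psi_{r-1}$ is precisely the sesqui-extension of the rank-$(r-1)$ string C-group $\Gamma_{r-1}$ with respect to $\rho_0$ and the same $\tau$, hence a string C-group by the induction hypothesis. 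It then remains to verify $\Psi_0\cap\Psi_{r-1}=\Psi_{0,r-1}$, and this is where projecting away $\tau$ does the work: letting $\pi\colon\Gamma\times\langle\tau\rangle\to\Gamma$ be the projection, $\Psi_0$ involves no $\tau$ and so lies in $\Gamma\times\{\eps\}$; hence any $g\in\Psi_0\cap\Psi_{r-1}$ has trivial $\tau$-component and satisfies $\pi(g)\in\Gamma_0\cap\Gamma_{r-1}=\Gamma_{0,r-1}$ by the intersection property of $\Gamma$. Since $\Gamma_{0,r-1}=\langle\rho_1,\dots,\rho_{r-2}\rangle$ sits inside $\Psi$ with trivial $\tau$-component as $\Psi_{0,r-1}$, this forces $g\in\Psi_{0,r-1}$; the reverse inclusion is clear. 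Proposition~\ref{lem:max} then yields that $\Psi$ is a string C-group.

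The step I expect to be most delicate — and the reason I route through Proposition~\ref{lem:max} rather than quoting a mixing theorem directly — is the behaviour of $\tau$ inside $\Psi$. When the assignment $\rho_0\mapsto\tau$, $\rho_i\mapsto\eps$ ($i\ge1$) extends to a homomorphism $\Gamma\to C_2$ (roughly, when $p_{01}$ is even) one has $\tau\notin\Psi$ and $\pi$ restricts to a generator-preserving isomorphism $\Psi\cong\Gamma$, so the conclusion is immediate; otherwise $\tau\in\Psi$ and $\Psi=\Gamma\times C_2$. A direct appeal to Proposition~\ref{lem:MixMon} stumbles exactly in this second regime, because $\Gamma_{r-1}$ need not cover the edge-facet $C_2$. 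The virtue of the argument above is that Proposition~\ref{lem:max} asks only for the single outermost intersection, which, as shown, holds uniformly in both regimes precisely because $\Psi_0$ carries no $\tau$.
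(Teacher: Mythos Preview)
Your argument is correct. The paper does not actually prove this proposition: it merely observes (in the sentence preceding the statement) that a sesqui-extension with respect to $\rho_0$ is the mix $\Gamma\mix e$ with the automorphism group of an edge, and then cites~\cite{Alt2} for the result. Your route, by contrast, is self-contained: having made the same identification of $\Psi$ inside $\Gamma\times\langle\tau\rangle$, you induct on $r$ and apply Proposition~\ref{lem:max}, using the projection killing $\tau$ to verify $\Psi_0\cap\Psi_{r-1}=\Psi_{0,r-1}$ uniformly --- the key point being that $\Psi_0=\Gamma_0$ carries no $\tau$, so the intersection lands in $\Gamma_0\cap\Gamma_{r-1}=\Gamma_{0,r-1}$. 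Your caution about Proposition~\ref{lem:MixMon} is justified as that proposition is stated, though note that its \emph{dual} (vertex-figures $P_0,Q_0$ in place of facets) handles the situation at once: $e_0$ is the trivial group, which $\Gamma_0$ always covers --- modulo the formal nuisance that $e$ padded with trivial generators is not strictly a rank-$r$ sggi. One small point to make explicit: for the induction hypothesis to apply to $\Psi_{r-1}$ you need $\tau\notin\Gamma_{r-1}$, which follows at once from $\tau\notin\Gamma$ but is worth stating.
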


The following lemma shows that we can often use this result in a more general setting.

\begin{lemma} 
\label{se} (Lemma 5.4 of~\cite{Alt2})
If $\Gamma=\left<\rho_i\,|\, i=0,\,\ldots,\,r-1\right>$ and $\Psi=\langle \rho_i \tau^{\eta_i}\,|\, i\in \{0,\,\ldots,\,r-1\} \rangle$ is a sesqui-extension of $\Gamma$ with respect to $\rho_k$, then:
\begin{enumerate}
\item $ \Psi \cong \Gamma$ or $\Psi \cong \Gamma\times \langle \tau\rangle\cong\Gamma\times 2$.
\item If the identity element of $\Gamma$ can be written as a product of generators involving an odd number of  $\rho_k$'s, then $\Psi \cong\Gamma\times \langle \tau\rangle$.
\item if $\Gamma$ is a finite permutation group, $\tau$ and $\rho_k$ are odd permutations, and all other $\rho_i$ are even permutations, then $\Psi \cong\Gamma$.
\item whenever $\tau \notin \Psi$, $\Gamma$ is a string C-group if and only if $\Psi$ is a string C-group.
\end{enumerate}
\end{lemma}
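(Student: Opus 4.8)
The plan is to exploit the fact that $\tau$ is a central involution lying outside $\Gamma$, so that $\langle \Gamma, \tau\rangle = \Gamma\times\langle\tau\rangle$ and, since each generator $\rho_i\tau^{\eta_i}$ lies in this product, $\Psi \le \Gamma\times\langle\tau\rangle$. First I would set up the projection homomorphism $\phi\colon \Gamma\times\langle\tau\rangle \to \Gamma$ onto the first factor (killing $\tau$). Because $\phi(\rho_k\tau)=\rho_k$ and $\phi(\rho_i)=\rho_i$ for $i\ne k$, the restriction $\phi|_\Psi$ is a surjection onto $\Gamma$ whose kernel is $\Psi\cap\langle\tau\rangle$. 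As $\langle\tau\rangle$ has order two, this kernel is either trivial or all of $\langle\tau\rangle$, and this dichotomy is exactly part (1): if $\tau\notin\Psi$ then $\phi|_\Psi$ is an isomorphism $\Psi\cong\Gamma$; if $\tau\in\Psi$ then $\rho_k=(\rho_k\tau)\tau\in\Psi$ forces every $\rho_i\in\Psi$, hence $\Gamma\le\Psi$ and therefore $\Psi=\Gamma\times\langle\tau\rangle$. This projection is really the engine of the whole lemma, and its only genuine content; the remaining parts just decide which horn of the dichotomy occurs.

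For part (2) I would take a relator $w$ expressing $\eps$ as a product of the $\rho_i$ that uses $\rho_k$ an odd number of times, and evaluate the corresponding word $w^\ast$ in the generators $\rho_i\tau^{\eta_i}$ of $\Psi$. Since $\tau$ is central it can be pulled out of $w^\ast$, accumulating one factor of $\tau$ for each occurrence of $\rho_k$; thus $w^\ast = w\,\tau^{(\mathrm{odd})} = \tau$, so $\tau\in\Psi$ and part (1) gives $\Psi=\Gamma\times\langle\tau\rangle$. For part (3) I would invoke the sign homomorphism: each generator $\rho_i$ with $i\ne k$ is even by hypothesis, while $\rho_k\tau$ is a product of two odd permutations and hence even, so every element of $\Psi$ is even; since $\tau$ is odd, $\tau\notin\Psi$, and part (1) yields $\Psi\cong\Gamma$.

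Finally, for part (4), when $\tau\notin\Psi$ the map $\phi|_\Psi$ is an isomorphism sending each distinguished generator $\rho_i\tau^{\eta_i}$ of $\Psi$ to the corresponding generator $\rho_i$ of $\Gamma$; note also that centrality of $\tau$ makes each $\rho_i\tau^{\eta_i}$ an involution, so $\Psi$ is genuinely an sggi. I would then observe that all the conditions defining a string C-group—that the generators are involutions, the string relations \eqref{coxrel}--\eqref{string}, and the intersection property \eqref{intprop}—are intrinsic to a group equipped with its ordered list of distinguished generators, and are therefore preserved by any isomorphism respecting those generators. Concretely, $\phi|_\Psi$ carries $\langle \rho_i\tau^{\eta_i}\mid i\in I\rangle$ onto $\langle\rho_i\mid i\in I\rangle$, so the lattice of these subgroups, and hence \eqref{intprop}, transfers verbatim; consequently $\Psi$ is a string C-group if and only if $\Gamma$ is. The main thing to keep straight is not any single hard step but the repeated use of centrality of $\tau$, which is simultaneously what makes $\langle\Gamma,\tau\rangle$ a direct product and what lets the $\tau$-factors be collected in parts (2)--(3); once that is in hand, every part reduces to a short computation.
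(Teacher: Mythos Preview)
Your proof is correct. Note, however, that the paper does not actually prove this lemma: it is quoted verbatim as Lemma~5.4 of~\cite{Alt2} and used as a black box, so there is no ``paper's own proof'' to compare against. That said, your argument via the projection $\Gamma\times\langle\tau\rangle\to\Gamma$ is exactly the standard one, and is essentially how the result is established in~\cite{Alt2}; the dichotomy in part~(1) governed by whether $\tau\in\Psi$, the parity counts in parts~(2) and~(3), and the generator-respecting isomorphism in part~(4) are all the expected moves.
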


\begin{proposition} \label{ext}
Let $\Gamma=\left<\rho_i\,|\, i=0,\,\ldots,\,r-1\right>$ 
and $\Psi=\langle \rho_i \tau^{\eta_i}\,|\, i\in \{0,\,\ldots,\,r-1\} \rangle$ be a sesqui-extension of $\Gamma$ with respect to $\rho_k$.  If either $\Psi_0 \cong \Gamma_0$ or $\Psi_{r-1} \cong \Gamma_{r-1}$ as string C-groups, then $\Psi$ is a string C-group.
\end{proposition}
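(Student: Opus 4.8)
The plan is to verify the hypotheses of Proposition~\ref{lem:max} for $\Psi$, namely that $\Psi_0$ and $\Psi_{r-1}$ are string C-groups and that $\Psi_0\cap\Psi_{r-1}=\Psi_{0,r-1}$. First I would normalize the hypothesis. Reversing the order of the generators $\rho_0,\ldots,\rho_{r-1}$ preserves all of the defining relations and the intersection property, carries the sesqui-extension with respect to $\rho_k$ to the one with respect to $\rho_{r-1-k}$, and interchanges the roles of $\Psi_0$ and $\Psi_{r-1}$ (and of $\Gamma_0$ and $\Gamma_{r-1}$). Hence I may assume without loss of generality that the second alternative holds, $\Psi_{r-1}\cong\Gamma_{r-1}$ as string C-groups. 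This immediately settles one facet hypothesis, since $\Psi_{r-1}$ is then a string C-group; and by Lemma~\ref{se}(1) the isomorphism $\Psi_{r-1}\cong\Gamma_{r-1}$ (rather than $\Gamma_{r-1}\times 2$) forces $\tau\notin\Psi_{r-1}$, a fact that will drive the rest of the argument.

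Next I would prove that the opposite facet subgroup $\Psi_0=\langle \rho_i\tau^{\eta_i}\mid 1\le i\le r-1\rangle$ is a string C-group, by induction on the rank $r$. If $k=0$ this is immediate because $\Psi_0=\Gamma_0$. Otherwise $\Psi_0$ is itself the sesqui-extension of $\Gamma_0$ with respect to $\rho_k$, and $\Gamma_0$ is a string C-group as a subgroup of the string C-group $\Gamma$. Since $\Psi_{0,r-1}\subseteq\Psi_{r-1}$ we have $\tau\notin\Psi_{0,r-1}$, so Lemma~\ref{se}(1) gives that the last facet of $\Psi_0$ satisfies $\Psi_{0,r-1}\cong\Gamma_{0,r-1}$ as string C-groups. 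This is precisely the hypothesis needed to invoke the rank $r-1$ case of the proposition on the pair $(\Gamma_0,\Psi_0)$, so the induction yields that $\Psi_0$ is a string C-group; the base case (rank two, where the intersection property is automatic, together with the extreme indices $k\in\{1,r-1\}$ covered by Proposition~\ref{se0} and its mirror image) is routine.

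It remains to verify $\Psi_0\cap\Psi_{r-1}=\Psi_{0,r-1}$, which I regard as the crux. The containment $\supseteq$ is clear, so I would argue the reverse using the homomorphism $\phi\colon\Psi\to\Gamma$ obtained by restricting the projection $\Gamma\times\langle\tau\rangle\to\Gamma$, so that $\phi(\rho_i\tau^{\eta_i})=\rho_i$ and $\ker\phi\subseteq\langle\tau\rangle$. Given $g\in\Psi_0\cap\Psi_{r-1}$, its image satisfies $\phi(g)\in\Gamma_0\cap\Gamma_{r-1}=\Gamma_{0,r-1}$, the last equality being the intersection property of the string C-group $\Gamma$. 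Since $\phi$ maps $\Psi_{0,r-1}$ onto $\Gamma_{0,r-1}$, I may pick $h\in\Psi_{0,r-1}$ with $\phi(h)=\phi(g)$; then $gh^{-1}\in\ker\phi\subseteq\langle\tau\rangle$, while also $gh^{-1}\in\Psi_{r-1}$. As $\tau\notin\Psi_{r-1}$, this forces $gh^{-1}=\eps$, whence $g=h\in\Psi_{0,r-1}$. With the three conditions established, Proposition~\ref{lem:max} gives that $\Psi$ is a string C-group.

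The step I expect to be the main obstacle is proving that the second facet subgroup $\Psi_0$ is a string C-group: this is where the induction is genuinely needed (for a middle index $2\le k\le r-2$ the generator $\rho_k$ is neither first nor last in $\Gamma_0$, so Proposition~\ref{se0} does not apply directly), and one must check carefully that the facet isomorphism $\Psi_{0,r-1}\cong\Gamma_{0,r-1}$ really does supply the hypothesis required to re-enter the proposition at rank $r-1$. Once $\tau\notin\Psi_{r-1}$ has been extracted from Lemma~\ref{se}, the intersection computation is short; the only other essential input is that $\Gamma$ itself is a string C-group, so that $\Gamma_0\cap\Gamma_{r-1}=\Gamma_{0,r-1}$.
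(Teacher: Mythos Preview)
Your argument is correct, but it takes a genuinely different route from the paper. You invoke part~(a) of Proposition~2E16 (packaged here as Proposition~\ref{lem:max}), which forces you to establish that \emph{both} facet subgroups $\Psi_0$ and $\Psi_{r-1}$ are string C-groups; since the hypothesis only hands you $\Psi_{r-1}$, you manufacture the result for $\Psi_0$ by an induction on the rank. The paper instead appeals to part~(b) of Proposition~2E16, which requires only that $\Psi_{r-1}$ be a string C-group together with the family of intersection conditions $\Psi_{r-1}\cap\langle\sigma_j,\ldots,\sigma_{r-1}\rangle=\langle\sigma_j,\ldots,\sigma_{r-2}\rangle$ for each $j$. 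Those intersections are dispatched by exactly the kernel argument you use in your step~4: project by $\phi$ to $\Gamma$, use the intersection property there, and then kill the $\langle\tau\rangle$-ambiguity with $\tau\notin\Psi_{r-1}$. So the paper's proof is essentially your final paragraph applied to a wider family of intersections, and it sidesteps your inductive detour through $\Psi_0$ entirely.

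One remark that applies to both arguments: you explicitly invoke that $\Gamma$ is a string C-group (for $\Gamma_0\cap\Gamma_{r-1}=\Gamma_{0,r-1}$, and for the induction to get $\Gamma_0$ a string C-group). The proposition as stated does not say this, and indeed without it the conclusion can fail (take $k=r-1$, so $\Psi_{r-1}=\Gamma_{r-1}$ trivially, and choose $\Gamma$ an sggi with $\Gamma_{r-1}$ a C-group but $\Gamma$ not; if $\tau\notin\Psi$ then $\Psi\cong\Gamma$). The paper's terse proof needs the same hypothesis for the same reason. In every application in the paper $\Gamma$ is already known to be a string C-group, so this is a harmless omission, but it is worth flagging.
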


\begin{proof} 
This is a consequence of part (b) of Proposition 2E16 in~\cite{arp}.  Assume $\Psi_{r-1} \cong \Gamma_{r-1}$ as string C-groups. The intersection condition of part (b) holds as $\tau$ is not in any of the groups $\Psi_{r-1} \cap \langle \rho_k,\ldots, \rho_{r-1} \rangle$. 
\end{proof}

For any permutation group $\Gamma$ of degree $n$, we will use the notation $S_n$ to represent the full symmetric group, $A_n$ to represent the full alternating group, and $G^+$ to denote $G \cap A_n$.  If $\Gamma:=  \langle \rho_0, \ldots, \rho_{n-1} \rangle $, then for each $i$ we denote $\Gamma_i = \langle \rho_j \mid j \neq i \rangle$, where each $\Gamma_i$ is itself a string C-group.  Similarly, we will denote $\Gamma_{i,j} = \langle \rho_k \mid k \not \in \{i, j\}  \rangle$.
The {\em dual} $\Gamma^*$ of a string C-group $\Gamma$ is the group generated by the same involutions, but with the indexing reversed.

Finally, we will occasionally use the following rank reduction technique of~\cite{rank-reduction}.  We will frequently need to apply this construction to the dual of a string C-group, and then take the dual again.  When we do this, we simply call it the dual rank reduction.
\begin{proposition} \label{lem:RR}
Let $\Gamma = \langle \rho_0, \ldots \rho_{r-1} \rangle $ be a rank $r$ string C-group, where $| \rho_i \rho_{i+1} | > 2$ for all $0 \leq i \leq r-2$.   If $\rho_0 \in \langle \rho_0 \rho_2, \rho_3 \rangle$, then $\Gamma \cong \langle \rho_1, \rho_0 \rho_2, \rho_3, \ldots, \rho_{r-1} \rangle$ is a string C-group of rank $r-1$.
Furthermore, if $ \rho_2 \rho_3 $ has odd order, then this condition is satisfied.

\end{proposition}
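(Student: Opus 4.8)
The plan is to prove both assertions, treating the ``Furthermore'' claim first since it is self-contained. Write $a=\rho_0\rho_2$ and $b=\rho_3$. Because $\rho_0$ commutes with both $\rho_2$ and $\rho_3$ (their indices differ from $0$ by at least $2$), it commutes with $\rho_2\rho_3$, so $(ab)^k=(\rho_0\rho_2\rho_3)^k=\rho_0^{\,k}(\rho_2\rho_3)^k$ for all $k$. If $m=|\rho_2\rho_3|$ is odd then $(ab)^m=\rho_0^{\,m}(\rho_2\rho_3)^m=\rho_0$, and since $ab\in\langle\rho_0\rho_2,\rho_3\rangle$ this exhibits $\rho_0$ as an element of $\langle\rho_0\rho_2,\rho_3\rangle$, as required.

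For the main statement, set $\Gamma'=\langle\rho_1,\rho_0\rho_2,\rho_3,\dots,\rho_{r-1}\rangle$ with distinguished generators $\tau_0=\rho_1$, $\tau_1=\rho_0\rho_2$, and $\tau_i=\rho_{i+1}$ for $i\ge 2$. First I would record the easy structural facts. The element $\rho_0\rho_2$ is an involution because $\rho_0$ and $\rho_2$ commute, and a short check of the remaining pairs shows all required commutations $(\tau_i\tau_j)^2=\eps$ for $|i-j|\ge 2$ hold, so $\Gamma'$ is a string group generated by involutions of rank $r-1$. The hypothesis $\rho_0\in\langle\rho_0\rho_2,\rho_3\rangle$ gives $\rho_0\in\Gamma'$ and hence $\rho_2=\rho_0(\rho_0\rho_2)\in\Gamma'$, so $\Gamma'$ contains every $\rho_i$ and therefore $\Gamma'=\Gamma$ as groups. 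It remains to verify the intersection property for the generating set $\{\tau_i\}$, and for this I would use Proposition~\ref{lem:max} together with induction on $r$.

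In the inductive step ($r\ge 5$) the facet $\Gamma'_0=\langle\rho_0\rho_2,\rho_3,\dots,\rho_{r-1}\rangle$ is exactly the sesqui-extension of the string C-group $\langle\rho_2,\dots,\rho_{r-1}\rangle$ with respect to its first generator $\rho_2$, taking $\tau=\rho_0$ (which is an involution commuting with $\rho_2,\dots,\rho_{r-1}$ and, by the intersection property of $\Gamma$, lies outside that subgroup); hence $\Gamma'_0$ is a string C-group by Proposition~\ref{se0}. The other facet $\Gamma'_{r-2}=\langle\rho_1,\rho_0\rho_2,\rho_3,\dots,\rho_{r-2}\rangle$ is precisely the output of this same rank-reduction construction applied to the facet $\Gamma_{r-1}$, which is a string C-group of rank $r-1$ satisfying the same hypotheses, so it is a string C-group by the induction hypothesis. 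Finally, using $\rho_0\in\langle\rho_0\rho_2,\rho_3\rangle$ one rewrites $\Gamma'_0=\Gamma_1$, $\Gamma'_{r-2}=\Gamma_{r-1}$, and $\Gamma'_{0,r-2}=\Gamma_{1,r-1}$ as subgroups of $\Gamma$, so the condition $\Gamma'_0\cap\Gamma'_{r-2}=\Gamma'_{0,r-2}$ reduces to $\Gamma_1\cap\Gamma_{r-1}=\Gamma_{1,r-1}$, which is immediate from the intersection property of $\Gamma$. Proposition~\ref{lem:max} then finishes the inductive step.

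The main obstacle is the base case $r=4$, where $\Gamma'_{r-2}=\langle\rho_1,\rho_0\rho_2\rangle$ no longer recovers $\rho_0$, so the intersection $\Gamma'_0\cap\Gamma'_2=\langle\rho_0\rho_2\rangle$ must be checked by hand. I would argue that $\Gamma'_0\cap\Gamma'_2\subseteq\Gamma_1\cap\Gamma_3=\langle\rho_0,\rho_2\rangle$, the Klein four-group, and then show the only elements of $\langle\rho_0,\rho_2\rangle$ lying in $\Gamma'_2$ are $\eps$ and $\rho_0\rho_2$. This amounts to proving $\rho_0\notin\langle\rho_1,\rho_0\rho_2\rangle$, which is exactly where the hypothesis $|\rho_i\rho_{i+1}|>2$ enters: if $\rho_0$ did lie in $\langle\rho_1,\rho_0\rho_2\rangle$ then $\Gamma_3=\langle\rho_0,\rho_1,\rho_2\rangle$ would be generated by the two involutions $\rho_1,\rho_0\rho_2$ and hence dihedral, and the reflection-parity homomorphism onto the index-two cyclic rotation subgroup would send exactly one of $\rho_0,\rho_2$ into that subgroup; being an involution there, it would be the central involution of $\Gamma_3$ and thus commute with $\rho_1$, contradicting $|\rho_0\rho_1|>2$ or $|\rho_1\rho_2|>2$. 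With the base case in hand the induction is complete.
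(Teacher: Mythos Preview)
Your argument is correct. The paper does not prove this proposition at all: its ``proof'' is a two-line citation of Theorem~1.1 and Corollary~1.2 of the Brooksbank--Leemans rank-reduction preprint, so there is nothing to compare at the level of argument. You have supplied a genuine, self-contained proof built entirely from machinery already present in the paper (Proposition~\ref{lem:max}, the sesqui-extension Proposition~\ref{se0}, and the intersection property of $\Gamma$), which is more than the paper itself offers here.

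A few small comments on your write-up. First, the observation that $\Gamma'_0$ is a sesqui-extension of $\langle\rho_2,\dots,\rho_{r-1}\rangle$ with respect to its first generator is exactly right and is the cleanest way to see it is a string C-group with the $\tau$-generators; note this step is genuinely needed, since knowing $\Gamma'_0=\Gamma_1$ as abstract groups does not by itself give the intersection property for the \emph{new} generating set. Second, in the base case you should record explicitly that the degenerate possibility $\rho_1=\rho_0\rho_2$ (so that the dihedral group collapses to order~$2$ and has no index-two rotation subgroup) is excluded because it forces $\rho_1\rho_2=\rho_0$ of order~$2$, contradicting $|\rho_1\rho_2|>2$; and likewise that if the rotation subgroup has odd order there is no involution in it at all, again a contradiction. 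With those remarks your dihedral/centrality argument for $\rho_0\notin\langle\rho_1,\rho_0\rho_2\rangle$ is airtight. Finally, a wording nit: the ``reflection-parity homomorphism'' has the rotation subgroup as its \emph{kernel}, not its image, but your use of it is clear.
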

\begin{proof}
This combines the results of Theorem 1.1 and Corollary 1.2 of~\cite{rank-reduction}.
\end{proof}


\section{Building Blocks}\label{sec3}

In this section, we provide some examples of families of string C-groups which appear as subgroups of the groups in our main theorem.

\begin{lemma}\label{lem:FL}
For each $r \geq 3$ the permutation group $FL(r,k)$ given by the following graph is a string C-group isomorphic to $S_{r+1+k}$, for all odd $k \geq 0$, and 
$$\includegraphics[scale=.5]{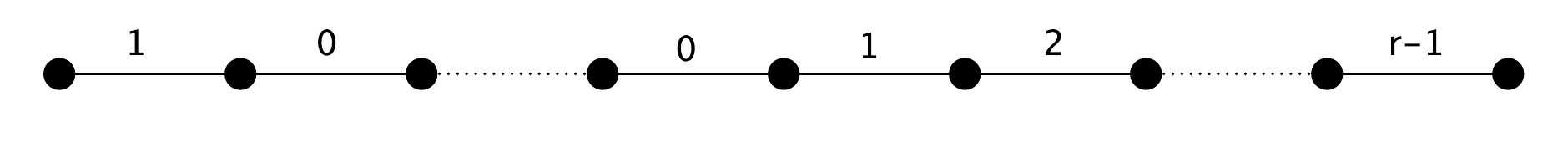}$$
for each $r \geq 3$ the permutation group $FL(r,k)$ given by the following graph is a string C-group isomorphic to $S_{r+1+k}$, for all even $k \geq 0$
$$\includegraphics[scale=.5]{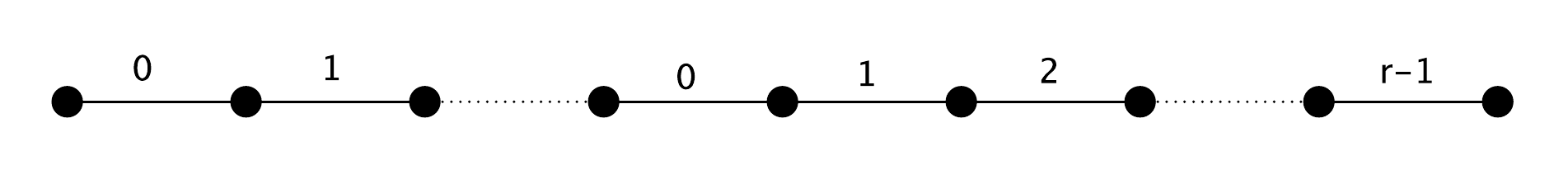}$$
\end{lemma}

We point out that there are $k+2$ edges of labels 0 or 1 in each graph.

\begin{proof}
This combines the results of Theorem 1, Theorem 2, and Lemma 21 from~\cite{high-rank-symmetric}.
\end{proof}


\begin{lemma}\label{lem:R}
For each $r \geq 5$ the permutation group $R(r,k)$ given by the following graph is a string C-group isomorphic to $S_{r+3+k}$, for all odd $k \geq 0$, and 
$$\includegraphics[width=6in]{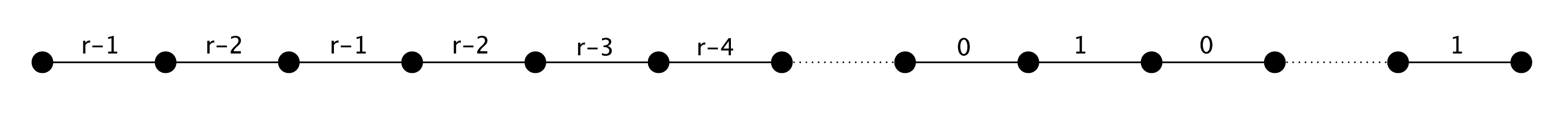}$$
for each $r \geq 5$ the permutation group $R(r,k)$ given by the following graph is a string C-group isomorphic to $S_{r+3+k}$, for all even $k \geq 0$. 
$$\includegraphics[width=6in]{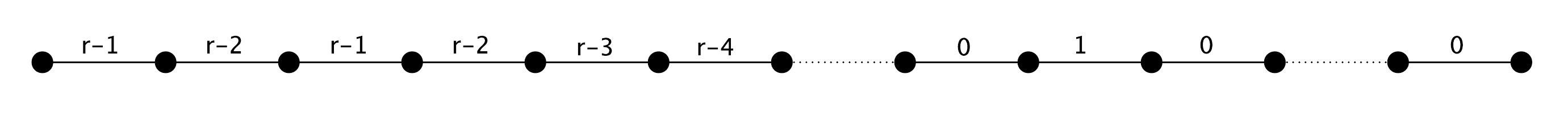}$$
\end{lemma}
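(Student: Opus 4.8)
The plan is to mirror the strategy behind Lemma~\ref{lem:FL}, proving the two claims (odd $k$ and even $k$) in parallel by induction, with the induction step increasing $k$ by two so that the parity of $k$ --- and hence the parity of the relevant generators --- is preserved. First I would read off from the graph that $\rho_0,\ldots,\rho_{r-1}$ are involutions and that non-consecutive labels index vertex-disjoint edges, so that $(\rho_i\rho_j)^2=\eps$ for $|i-j|\geq 2$; this establishes that $R(r,k)$ is an sggi of rank $r$. The base cases $k=0$ (even graph) and $k=1$ (odd graph) would be handled for all $r\geq 5$ either directly or, where convenient, by recognizing the graph as an instance of the $FL$ family of Lemma~\ref{lem:FL}.

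Next I would identify the permutation group as the full symmetric group $S_{r+3+k}$. Connectedness of the graph gives transitivity on the $r+3+k$ vertices. To upgrade this to all of $S_{r+3+k}$, I would exhibit a transposition as one generator (or a short product of generators) acting on a pendant portion of the graph, establish primitivity by showing that any nontrivial block system is impossible --- the overlap pattern of the edges should force any block containing two adjacent vertices to be the whole vertex set --- and then invoke the classical fact that a primitive permutation group containing a transposition is the full symmetric group. Since at least one generator is an odd permutation, the group is not contained in $A_{r+3+k}$, which pins it down as $S_{r+3+k}$.

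For the string C-group property I would use Proposition~\ref{lem:max}. By the inductive hypothesis both $\Gamma_0=\langle\rho_1,\ldots,\rho_{r-1}\rangle$ and $\Gamma_{r-1}=\langle\rho_0,\ldots,\rho_{r-2}\rangle$ are string C-groups: deleting the label $0$ edge or the label $r-1$ edge should leave a graph in the same family (with $r$ or $k$ reduced) or an $FL$ graph covered by Lemma~\ref{lem:FL}. It then remains to check $\Gamma_0\cap\Gamma_{r-1}=\Gamma_{0,r-1}$, and the cleanest route is to verify that $\Gamma_{0,r-1}$ is a \emph{maximal} subgroup of $\Gamma_0$ or of $\Gamma_{r-1}$; here $\Gamma_{0,r-1}$ will typically be a symmetric group $S_m$ sitting maximally (as a point stabilizer or an intransitive factor) inside the slightly larger symmetric group $\Gamma_0$ or $\Gamma_{r-1}$.

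I expect the intersection property --- specifically pinning down $\Gamma_{0,r-1}$ and confirming its maximality, or else computing $\Gamma_0\cap\Gamma_{r-1}$ directly from the supports of the two subgroups --- to be the main obstacle, since deleting an end edge can alter the orbit structure in a way that forces a case split between the odd-$k$ and even-$k$ graphs. The parity of $k$ also governs which generators are even or odd, and hence whether the relevant end subgroups land in a symmetric or an alternating group; for this reason I would keep the two graphs bookkept separately throughout the induction, even though the logical skeleton of the argument is identical in the two cases.
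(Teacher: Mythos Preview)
Your strategy is plausible but takes a genuinely different route from the paper. The paper's proof is a two-line induction on $k$ (by one, not two): for the base case it simply observes that $R(r,0)$ is the \emph{dual} of $FL(r,2)$, hence already a string C-group isomorphic to $S_{r+3}$ by Lemma~\ref{lem:FL}; for the inductive step it applies the rank reduction of Proposition~\ref{lem:RR} to $R(r+1,k-1)$ and notes that the output is exactly $R(r,k)$, so both the string C-group property and the isomorphism $R(r,k)\cong S_{r+3+k}$ are inherited for free. No intersection computation, no primitivity argument, and no parity bookkeeping are needed.

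Your plan via Proposition~\ref{lem:max} could be made to work, but it is considerably heavier, and the place you flag as the main obstacle is real. Because $R(r,k)$ carries at its high-label end the nontrivial configuration of $r-2$ and $r-1$ edges coming from the dual of $FL(r,2)$, deleting $\rho_{r-1}$ does \emph{not} obviously return you to the $R$ or $FL$ family; you would need to identify that subgroup from scratch before you could check maximality of $\Gamma_{0,r-1}$. The rank-reduction argument sidesteps all of this structural analysis, and also dispenses with your separate primitivity-plus-transposition step for identifying the group, since the isomorphism type is carried along by the induction.
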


\begin{proof}  
We will prove this by induction on $k$.  To clarify the notation, we point out that there are $k+2$ edges of labels 0 or 1 in each graph.  When $k=0$, $R(r,k)$ is a string C-group isomorphic to $S_{r+3}$ as it is the dual of $FL(r,2)$ from Lemma~\ref{lem:FL}.  We will assume by induction that $R(r+1,k-1)$ is a string C-group isomorphic to $S_{r+1+3+k-1}$, and then applying the rank reduction from Proposition~\ref{lem:RR} to $R(r+1,k-1)$ we get $R(r,k)$, which is thus a string C-group isomorphic to $S_{r+3+k}$.

\end{proof}


\begin{lemma}\label{lem:Sh}
For each $r \geq 4$ the permutation group $Sh(r,k)$ given by the following graph is a string C-group isomorphic to   $ (S_2 \wr S_{r+\frac{k}{2}})^+$, for all $k \equiv 2 \pmod{4}$, with $k \geq 0$.  
$$\includegraphics[scale=.5]{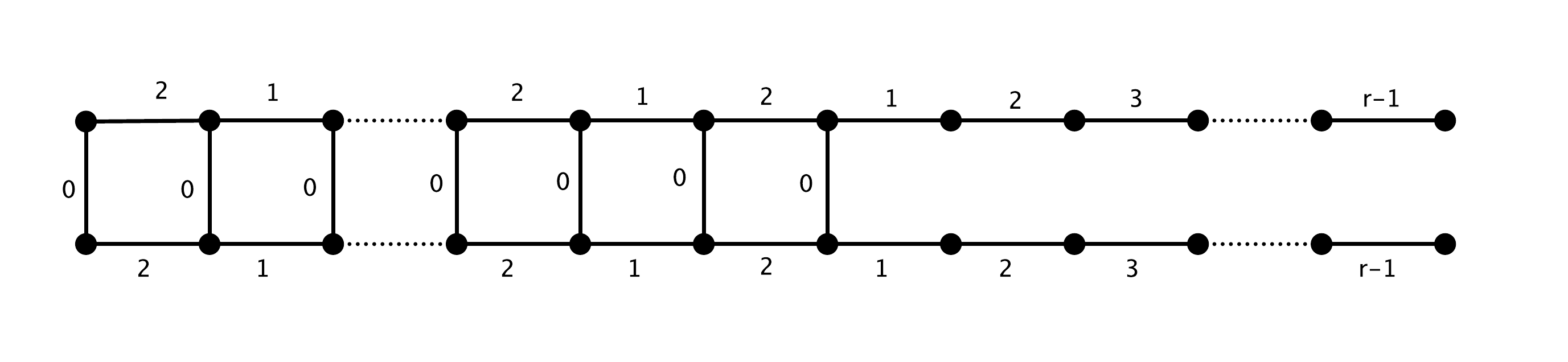}$$
\end{lemma}
Note that there are $1+\frac{k}{2}$ edges of label 0 in each such graph.  We also note that when $r=3$ this graph still provides a string C-group. 

\begin{proof}
We prove that the group is a string C-group by induction on $r$, where the proof of the base case will be nearly the same as the proof of the inductive step.
Let $\Gamma = \langle \rho_0, \ldots, \rho_{r-1} \rangle = Sh(r,k)$.  When $r=4$, the group $\Gamma_{r-1}$ is a string C-group by Theorem 4.4 of~\cite{DanielCPR}.
Furthermore, when $r$ is greater than $4$, we can assume that $\Gamma_{r-1}$ is a string C-group as $\Gamma_{r-1} = Sh(r-1,k)$.

For all $r \geq 4$, the group $\Gamma_0$ is isomorphic to the string C-group $FL(r-1,\frac{k}{2}) \diamond FL(r-1,\frac{k}{2})$ which is isomorphic to $S_{r+\frac{k}{2}}$.  It remains to show that  $\Gamma_0 \cap  \Gamma_{r-1} =  \Gamma_{0,r-1}$.

 The group $\Gamma$ is an imprimitive group with a natural block structure, having $r+\frac{k}{2}$ blocks of size two.   The generator $\rho_0$ is the only generator acting within a block, and thus $\Gamma_0$ gives the action on the blocks.  
 
Let $\alpha \in \Gamma_0 \cap \Gamma_{r-1}$.  Since $\alpha \in \Gamma_0$ it only acts on the blocks, and since $\alpha \in \Gamma_{r-1}$ it fixes the ``last block" (the block in the support of $\rho_{r-1}$).  Since $\Gamma_{0,r-1}$ gives the action on these $r+\frac{k}{2}-1$ blocks, we know that $\alpha \in \Gamma_{0,r-1}$.  Thus by Proposition~\ref{lem:max}, $\Gamma$ is a string C-group.
Finally, to prove that the group is the collection of all even permutations in the wreath product, observe that the group $\Gamma_0$ gives the full symmetric group acting on the blocks, and the element $(\rho_0 \rho_1)^2$ is a product of two disjoint transpositions, each swapping two elements within a block.

 \end{proof}
 

\begin{lemma}\label{lem:Bl}
For each $r \geq 6$ the permutation group $Bl(r,k)$ given by the following graph is a string C-group isomorphic to  $ (S_2 \wr S_{r+2+\frac{k}{2}})^+ $,  for all $k \equiv 2 \pmod{4}$, with $k \geq 0$.
$$\includegraphics[scale=.5]{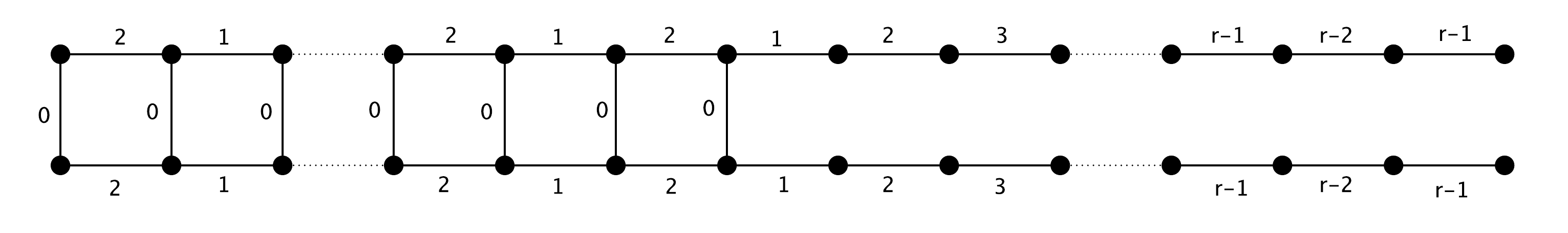}$$
\end{lemma}

We note that there are $1+\frac{k}{2}$ edges of label 0 in each graph.

\begin{proof}
The group $Bl(r,k)$ is obtained by applying the dual rank reduction of Lemma~\ref{lem:RR} to $Sh(r+2,k)$ and then again to the result.  This shows that $Bl(r,k)$ is a string C-group, and also that $Bl(r,k)$ is isomorphic to  $ (S_2 \wr S_{r+2+\frac{k}{2}})^+$.
\end{proof}


 \begin{lemma}\label{lem:P}
For each $r \geq 4$ the permutation group $P(r,k)$ given by the following graph is a string C-group isomorphic to $S_{r+2+k}$, for all odd $k \geq 0$, and
$$\includegraphics[scale=.5]{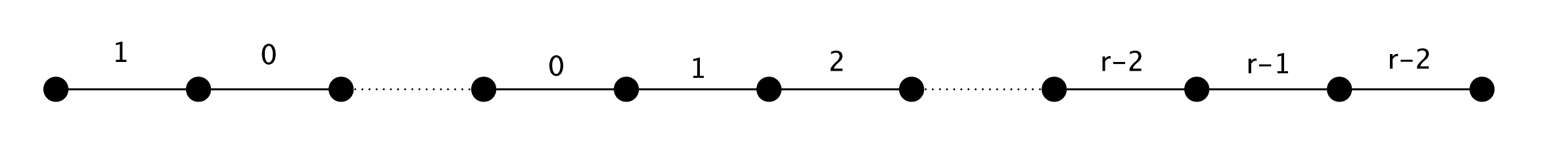}$$
for each $r \geq 4$ the permutation group $P(r,k)$ given by the following graph is a string C-group isomorphic to $S_{r+2+k}$, for all even $k \geq 0$, and
$$\includegraphics[scale=.5]{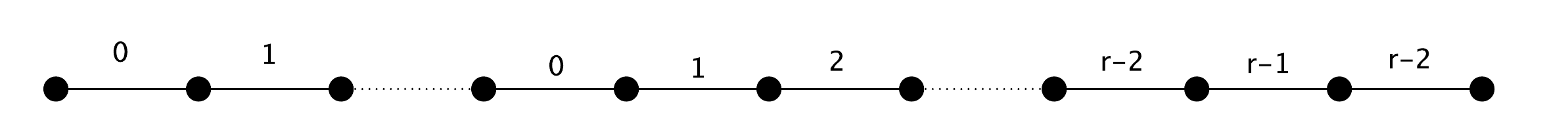}$$
\end{lemma}

There are $k+2$ edges of labels 0 or 1 in each graph.  Also, note that when $r=3$ this graph still provides a string C-group, see for example Theorem 4.5 of~\cite{DanielCPR} when $k$ is odd or Theorem 4.4 of~\cite{DanielCPR} when $k$ is even.

\begin{proof}
Let $\Gamma = \langle \rho_0, \ldots, \rho_{r-1} \rangle = P(r,k)$.   The group $\Gamma_0$ is a string C-group isomorphic to $S_2 \times S_{r+1}$ as it is a sesqui-extension of string C-group $FL(r-1,1)$ from Lemma~\ref{lem:FL}.  The group $\Gamma_{r-1}$ is a string C-group isomorphic to $S_2 \times S_{r+k}$ as it also is a sesqui-extension of string C-group $FL(r-1,k)$. Thus $\Gamma$ is isomorphic to $S_{r+2+k}$.
Finally, the group $\Gamma_{0,r-1}$ is isomorphic to $S_2 \times S_{r-1} \times S_2 $, which is maximal in $\Gamma_0$, and thus, by Proposition~\ref{lem:max}, $\Gamma$ is also a string C-group.
 \end{proof}


 \begin{lemma}\label{lem:Sp}
For each $r \geq 4$ the permutation group $Sp(r,k)$ given by the following graph is a string C-group isomorphic to $S_{r+2+\frac{k}{2}} \times S_{r+1+\frac{k}{2}}$  for $k \equiv 2 \pmod{4}$ with $k \geq 0$.  
$$\includegraphics[scale=.5]{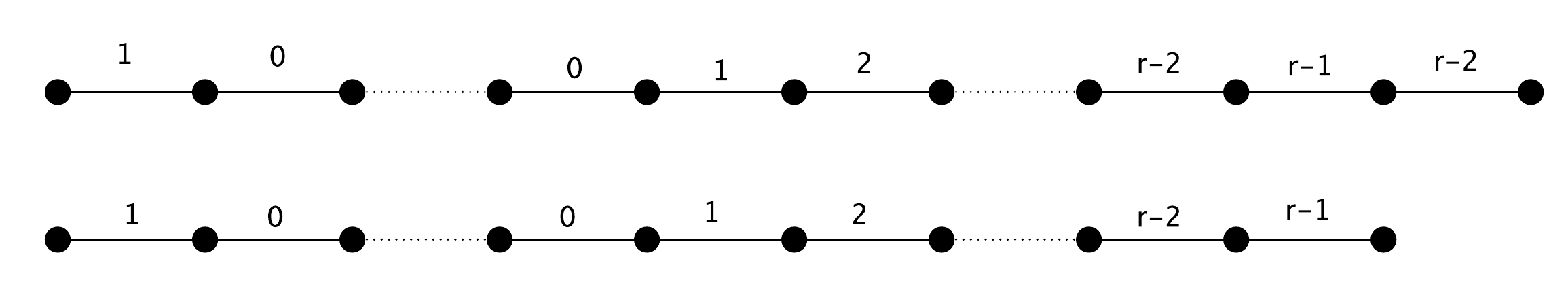}$$
\end{lemma}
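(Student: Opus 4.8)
The plan is to read off the permutation representation graph of $Sp(r,k)$ as a disjoint union of two connected components, and to recognize the group as a mix of two symmetric-group string C-groups, one per component. Since a connected permutation representation graph yields a transitive action, the two components are exactly the two orbits of $Sp(r,k)$, so the group embeds as a subdirect product inside $S_{r+2+\frac{k}{2}} \times S_{r+1+\frac{k}{2}}$ acting on the $2r+3+k$ vertices. Using that $k \equiv 2 \pmod 4$ forces $\frac{k}{2}$ to be odd (so that the odd versions of the graphs apply), I would identify the restriction to the larger component with $P(r,\frac{k}{2})$ and the restriction to the smaller component with $FL(r,\frac{k}{2})$; by Lemmas~\ref{lem:P} and~\ref{lem:FL} these are string C-groups isomorphic to $S_{r+2+\frac{k}{2}}$ and $S_{r+1+\frac{k}{2}}$ respectively. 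Hence $Sp(r,k) \cong P(r,\frac{k}{2}) \mix FL(r,\frac{k}{2})$.

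To prove $Sp(r,k)$ is a string C-group I would invoke Proposition~\ref{lem:MixMon}, letting the role of $P$ be played by the larger factor $P(r,\frac{k}{2})$ and $Q$ by the smaller factor $FL(r,\frac{k}{2})$, so that the hypothesis asks for $P_{r-1}$ to cover $Q_{r-1}$. After identifying each facet (both are symmetric groups on the respective vertex sets generated by the images of $\rho_0,\ldots,\rho_{r-2}$), the generator-respecting assignment $\rho_i|_{C_1} \mapsto \rho_i|_{C_2}$ extends to a surjective homomorphism because the facet of the larger factor covers that of the smaller; this is the covering required by Proposition~\ref{lem:MixMon}, which then gives that the mix is a string C-group.

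It remains to upgrade the subdirect product to the full direct product, which amounts to showing the comix $P(r,\frac{k}{2}) \comix FL(r,\frac{k}{2})$ is trivial, since its order equals the index of the mix in $S_{r+2+\frac{k}{2}} \times S_{r+1+\frac{k}{2}}$. Because $r \geq 4$ and $\frac{k}{2} \geq 1$, both factors are symmetric groups on at least five points, whose only proper quotients are $S_2$ (via the sign) and the trivial group; a common generator-respecting quotient of two non-isomorphic such groups must therefore be $S_2$ or trivial. A quotient $S_2$ exists precisely when every generator $\rho_i$ has the same parity on both components, so I would exhibit a single generator that is even on one component and odd on the other. This parity mismatch is where the hypothesis $k \equiv 2 \pmod 4$ is used, and it forces the comix to be trivial, so that $Sp(r,k)$ is all of $S_{r+2+\frac{k}{2}} \times S_{r+1+\frac{k}{2}}$.

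The main obstacle is this last parity computation. Every generator necessarily acts nontrivially on each component (otherwise one factor would have smaller rank), so a single transposition already gives odd parity, and the mismatch can only arise from a generator realized as a product of an even number of transpositions on one component and of an odd number on the other; locating such a generator and confirming its parities depends delicately on $k \bmod 4$. By contrast, matching the two component subgraphs to members of the families in Lemmas~\ref{lem:P} and~\ref{lem:FL} is essentially bookkeeping, and the covering hypothesis of Proposition~\ref{lem:MixMon} is routine once the facets are recognized as nested symmetric groups. It is precisely the triviality of the comix that separates this full direct product from the index-two even subgroups produced in Lemmas~\ref{lem:Sh} and~\ref{lem:Bl}, so the hypothesis on $k$ must be handled with care.
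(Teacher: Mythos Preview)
Your proposal follows essentially the same route as the paper: identify $Sp(r,k)$ as $P(r,\tfrac{k}{2}) \mix FL(r,\tfrac{k}{2})$, apply Proposition~\ref{lem:MixMon} via the facet covering, and then bound the comix to conclude the mix is the full direct product. One small correction: the facet $P(r,\tfrac{k}{2})_{r-1}$ is not the full symmetric group on its support but rather a sesqui-extension isomorphic to $S_{r+\frac{k}{2}}\times S_2$ (see the proof of Lemma~\ref{lem:P}); the required covering onto $FL(r,\tfrac{k}{2})_{r-1}\cong S_{r+\frac{k}{2}}$ is then simply the projection killing the $S_2$ factor, so your conclusion is unaffected.

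The only genuine difference is in how you bound the comix. The paper computes $P\comix Q$ explicitly by combining the relations $(\rho_{r-1}\rho_{r-2})^3=1$ from $FL$ and $(\rho_{r-1}\rho_{r-2})^4=1$ from $P$ (and further relations) to force all generators to collapse, obtaining $|P\comix Q|\le 2$; it then excludes each of the three index-two subgroups of $S_{r+2+\frac{k}{2}}\times S_{r+1+\frac{k}{2}}$ by exhibiting odd permutations in $P$, in $Q$, and in $P\mix Q$. Your shortcut---that any common sggi quotient of two non-isomorphic $S_n$'s with $n\ge 5$ is at most $C_2$, and that it is $C_2$ exactly when every $\rho_i$ has the same sign on both components---is a cleaner way to reach the same point, and reduces the problem to exactly the parity mismatch that the paper also invokes (its exclusion of $(S_{r+2+\frac{k}{2}}\times S_{r+1+\frac{k}{2}})^+$ is precisely the existence of a generator odd on one orbit and even on the other). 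So both arguments hinge on the same final parity check; your version just avoids the hands-on relation chase.
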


We note that there are $k+4$ edges of labels 0 or 1 in each such graph.  

\begin{proof}
Let $\Gamma = \langle \rho_0, \ldots, \rho_{r-1} \rangle = Sp(r,k)$.  Then, $\Gamma = P \diamond Q$, where $P = P(r,\frac{k}{2})$ from Lemma~\ref{lem:P}, and $Q$ is $FL(r,\frac{k}{2})$ from Lemma~\ref{lem:FL}.  Furthermore, the facets of $P$ cover the facets of $Q$; as the facets of $P$ are isomorphic to $S_{r+\frac{k}{2}} \times S_2$ where as the facets of $Q$ are isomorphic to $S_{r+\frac{k}{2}}$.  Thus by Proposition~\ref{lem:MixMon}, $\Gamma \cong P \diamond Q$ is a string C-group.

To determine that $\Gamma$ is the full direct product, we consider the comix of $P$ and $Q$, $C= P \boxempty Q$.  We will write $C$ as generated by $\rho_i$, along with all the relations from both $P$ and $Q$.   Thus, in $C$, $(\rho_{r-1} \rho_{r-2})^3 = 1$ from $Q$ and $1 =  (\rho_{r-1} \rho_{r-2})^4$ from $P$, and thus in $C$,  $\rho_{r-1} = \rho_{r-2}$.   

Then, it follows that in $C$, $(\rho_{r-2} \rho_{r-3})^3 = 1 =  (\rho_{r-1} \rho_{r-3})^3 = (\rho_{r-1} \rho_{r-3})^2$, and thus $\rho_{r-1} = \rho_{r-3}$.  Furthermore in $C$, we know that $(\rho_{r-3} \rho_{r-2} \rho_{r-1})^5 = 1$ from $P$, and so for instance $\rho_{r-1}^5 = \rho_{r-1}^2=1$, and thus $\rho_{r-1}=\rho_{r-2} = \rho_{r-3}= 1$.

 Then, it will follow that all $\rho_i = 1$ in $C$.  For example in $C$, $(\rho_{r-3} \rho_{r-4})^3 = 1$ and thus in C $\rho_{r-4} =1 $.   This argument works for showing that in $C$, $1= \rho_2 = \rho_3 = \cdots = \rho_{r-1}$. 
 
 Finally, $(\rho_0 \rho_1 \rho_2)^{\frac{k}{2}+4} =1$  and $(\rho_0 \rho_1)^{\frac{k}{2}+3} =1$, and so in $C$, $\rho_0=\rho_1$.

We have showed that $ C= P \boxempty Q $ has size at most two, and therefore $P \diamond Q$ is the full direct product or an index two subgroup of the full direct product.  There are three index two subgroups of $S_{r+2+\frac{k}{2}} \times S_{r+1+\frac{k}{2}}$: namely $A_{r+2+\frac{k}{2}} \times S_{r+1+\frac{k}{2}}$, $S_{r+2+\frac{k}{2}} \times A_{r+1+\frac{k}{2}}$, and $(S_{r+2+\frac{k}{2}} \times S_{r+1+\frac{k}{2}})^+$.    As there are odd permutations in $P$, odd permutations in $Q$, and odd permutations in $P \diamond Q$, we can rule out all three cases and conclude that $\Gamma \cong S_{r+2+\frac{k}{2}} \times S_{r+1+\frac{k}{2}}$.

 \end{proof}


 \begin{lemma}\label{lem:Sm}
For each rank $r \geq 4$ the permutation group $Sm(r)$ given by the following graph is a string C-group isomorphic to $(S_r \times S_{r+3})^+ $.  
$$\includegraphics[scale=.5]{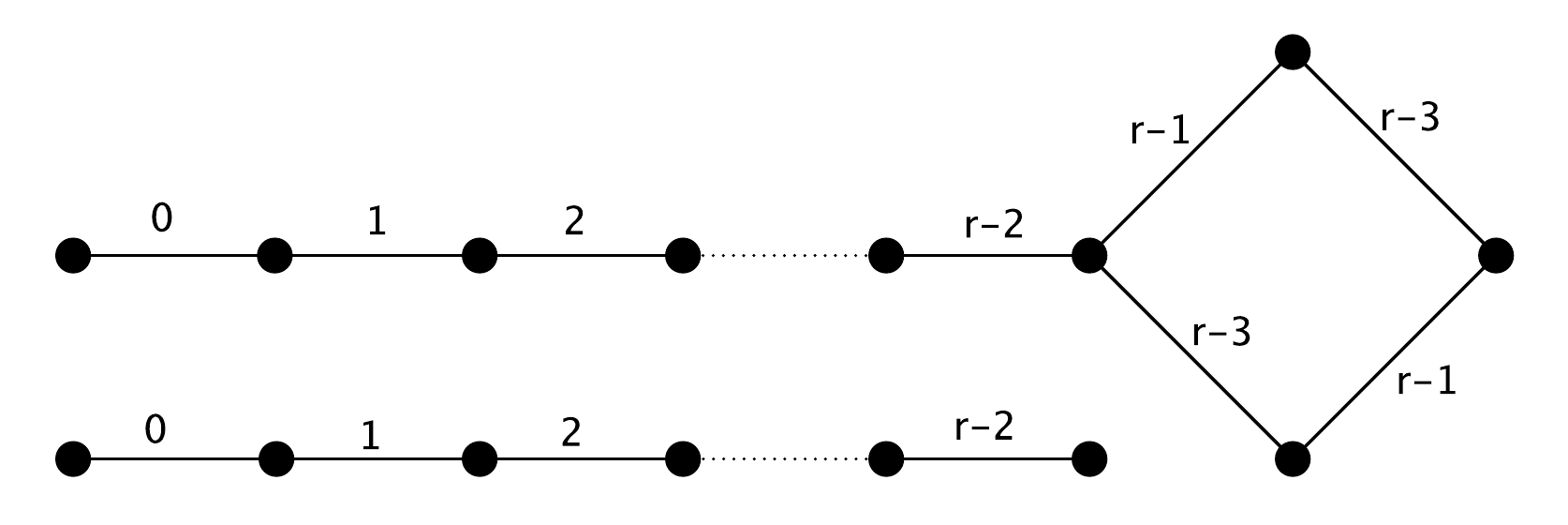}$$
\end{lemma}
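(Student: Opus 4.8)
The plan is to combine the inductive argument of Lemma~\ref{lem:Sh} for the string C-group property with the parity bookkeeping at the end of Lemma~\ref{lem:Sp} for the isomorphism type. Write $\Gamma=\langle\rho_0,\ldots,\rho_{r-1}\rangle=Sm(r)$. The permutation representation graph has two orbits, of sizes $r$ and $r+3$; letting $P$ and $Q$ be the restrictions of $\Gamma$ to these two orbits, one reads off from the graph that $P\cong S_r$ and $Q\cong S_{r+3}$, so that $\Gamma=P\diamond Q$ is a subgroup of $P\times Q=S_r\times S_{r+3}$ whose projections onto each factor are surjective. There are then two things to establish: that $\Gamma$ is a string C-group, and that it is the even subgroup $(S_r\times S_{r+3})^+$.

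For the string C-group property I would induct on $r$, exactly as in Lemma~\ref{lem:Sh}. In the inductive step one of the two facets, say $\Gamma_{r-1}$, is $Sm(r-1)\cong(S_{r-1}\times S_{r+2})^+$, which is a string C-group by the inductive hypothesis, while the vertex-figure $\Gamma_0$ is a string C-group drawn from the families of Section~\ref{sec3}. It then remains to verify $\Gamma_0\cap\Gamma_{r-1}=\Gamma_{0,r-1}$: since $\rho_{r-1}$ extends the longer orbit by one point, $\Gamma_{0,r-1}$ is obtained from $\Gamma_0$ by fixing that additional point, so it is a point-stabilizer and hence a maximal subgroup of $\Gamma_0$; Proposition~\ref{lem:max} then shows $\Gamma$ is a string C-group. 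The base case $r=4$ is handled by the same intersection argument, the facet being identified from small-rank results such as those in~\cite{DanielCPR} rather than from the inductive hypothesis.

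The isomorphism type is determined by parity, as in the final paragraph of Lemma~\ref{lem:Sp}. The graph is arranged so that each generator $\rho_i$ has the same sign on the $r$-orbit as on the $(r+3)$-orbit, so every $\rho_i$ is an even permutation of all $2r+3$ points; hence $\Gamma\leq(S_r\times S_{r+3})^+$, which has index two in $S_r\times S_{r+3}$, and in particular $\Gamma$ is proper. Since $\Gamma$ projects onto each of $S_r$ and $S_{r+3}$, Goursat's lemma presents it through a common quotient of the two factors; as $\Gamma$ is proper this quotient is nontrivial, and for $r\geq 4$ the only proper nontrivial common quotient of $S_r$ and $S_{r+3}$ is the sign quotient of order two. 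Therefore $\Gamma=(S_r\times S_{r+3})^+$. One may instead mimic Lemma~\ref{lem:Sp} directly and check that the comix $P\comix Q$ collapses to order exactly two.

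The main obstacle is the intersection step: correctly identifying the vertex-figure $\Gamma_0$ from the graph and confirming that $\Gamma_{0,r-1}$ sits inside it as a maximal (point-stabilizer) subgroup. The passage to the index-two even subgroup, which is what distinguishes this family from the full direct products of Lemma~\ref{lem:Sp}, is precisely why the parity observation above is needed: without it a bare order count could not separate $(S_r\times S_{r+3})^+$ from $S_r\times S_{r+3}$ itself.
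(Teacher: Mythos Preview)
Your overall plan---induct on $r$, identify $\Gamma_0$ and $\Gamma_{r-1}$ among the families already built, and apply Proposition~\ref{lem:max} via a maximality check---is exactly the paper's. The differences are in the details you left schematic.

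First, you have the two maximal parabolics interchanged: in the paper it is $\Gamma_0$ (not $\Gamma_{r-1}$) that coincides with $Sm(r-1)$, while $\Gamma_{r-1}$ is identified concretely as a sesqui-extension of $Sp(r-1,0)$ with respect to $\rho_{r-3}$, shown to be a string C-group via parts (3) and (4) of Lemma~\ref{se}. So the ``family of Section~\ref{sec3}'' you need is specifically $Sp$, filtered through a sesqui-extension, not a bare member of one of the earlier lemmas.

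Second, the maximality step is not the point-stabilizer argument you sketch. The paper shows $\Gamma_{0,r-1}\cong (S_{r-2}\times S_{r+1}\times S_2)^+$ is maximal in $\Gamma_0\cong (S_{r-1}\times S_{r+2})^+$; here $\rho_{r-1}$ does not merely add a single point to one orbit, so $\Gamma_{0,r-1}$ is not a point-stabilizer in $\Gamma_0$, and your one-line justification would not go through as written. This is precisely the ``main obstacle'' you flagged, and it requires the specific structural identification above rather than a generic stabilizer argument. The paper also dispatches the base case $r=4$ by computer rather than by a separate intersection argument.

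Your Goursat/parity argument for the isomorphism type is a correct and self-contained alternative to the paper's, which instead reads the isomorphism off from the inductive hypothesis on $\Gamma_0$; either route works, and yours has the advantage of not depending on the induction.
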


\begin{proof}
Let $\Gamma = \langle \rho_0, \ldots, \rho_{r-1} \rangle = Sm(r)$.   We proceed by induction on the rank $r$, where the base case of $r=4$ can be verified using Magma.  The group $\Gamma_0$ is isomorphic to $Sm(r-1)$, and by induction we may assume that $\Gamma_0$ is a string C-group isomorphic to $(S_{r-1} \times S_{r+2})^+$.   Therefore, it can be seen that $\Gamma \cong (S_r \times S_{r+3})^+$, and it remains to show that $\Gamma$ is a string C-group.
The group $\Gamma_{r-1}$ is isomorphic to the sesqui-extension of a group $Sp(r-1,0)$ through $\rho_{r-3}$.  By parts (3) and (4) of Lemma~\ref{se}, we can see that $\Gamma_{r-1}$ is a string C-group.  Finally, $\Gamma$ is a string C-group by Lemma~\ref{lem:max} as $\Gamma_{0,r-1} \cong (S_{r-2} \times S_{r+1} \times S_2)^+$ which is maximal in $\Gamma_0$.  
\end{proof}


 \begin{lemma}\label{lem:Sy}
For each rank $r \geq 6$ the permutation group $Sy(r,k)$ given by the following graph is a string C-group isomorphic to $(S_{r+\frac{k}{2}} \times S_{r+3+\frac{k}{2}})^+ $ for all $k \equiv 2 \pmod{4}$, with $k \geq 0$, and  
$$\includegraphics[scale=.5]{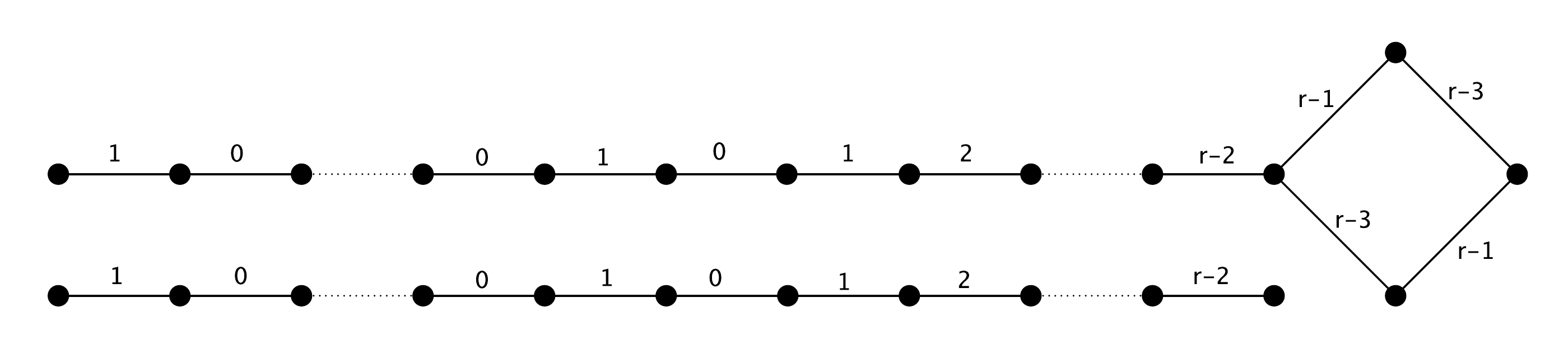}$$
for each rank $r \geq 6$ the permutation group $Sy(r,k)$ given by the following graph is a string C-group isomorphic to $(S_{r+\frac{k}{2}} \times S_{r+3+\frac{k}{2}})^+ $ for all $k \equiv 0 \pmod{4}$, with $k \geq 0$, and  
$$\includegraphics[scale=.5]{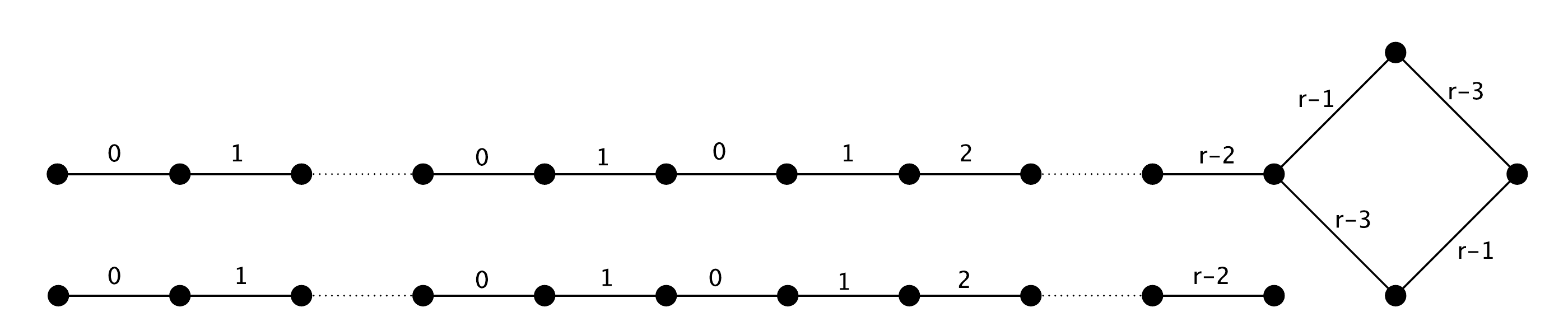}$$

\end{lemma}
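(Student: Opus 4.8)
The plan is to mirror the proof of Lemma~\ref{lem:R}: induct on $k$ in steps of $2$, using the rank reduction of Proposition~\ref{lem:RR} for the inductive step, with the base case supplied by Lemma~\ref{lem:Sm}. The guiding observation is that $Sy(r,k)$ and $Sm(r+\frac{k}{2})$ are the same abstract group $(S_{r+\frac{k}{2}} \times S_{r+3+\frac{k}{2}})^+$, so the $Sy$ family is nothing but a collection of lower-rank representations of the $Sm$ groups obtained by repeatedly collapsing a generator. Concretely, I claim that for $k \geq 2$ the graph $Sy(r,k)$ is exactly the output of applying rank reduction to $Sy(r+1,k-2)$: both permutation groups have degree $2r+3+k$, the rank drops from $r+1$ to $r$, and since rank reduction produces an isomorphic permutation group, the isomorphism type $(S_{r+\frac{k}{2}} \times S_{r+3+\frac{k}{2}})^+$ is inherited directly from the inductive hypothesis. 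Replacing $k$ by $k-2$ interchanges the classes $k \equiv 0$ and $k \equiv 2 \pmod 4$, so a single induction stepping by $2$ handles both pictures at once.

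For the base case $k=0$ I would note that $Sy(r,0)$ has isomorphism type $(S_r \times S_{r+3})^+$ and that the second graph (the $k \equiv 0 \pmod 4$ picture) specializes at $k=0$ to the graph $Sm(r)$ of Lemma~\ref{lem:Sm}. Since that lemma is valid for all $r \geq 4$, and in particular for $r \geq 6$, it gives that $Sy(r,0)$ is a string C-group of the required type for every $r \geq 6$, which anchors the induction.

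For the inductive step I would assume the statement for $k-2$ and all $r \geq 6$, and apply Proposition~\ref{lem:RR} to $\Gamma := Sy(r+1,k-2) = \langle \rho_0, \ldots, \rho_r \rangle$ (in its dual form if the $k$-tail is drawn at the $\rho_{r-1}$ end, and directly otherwise). The two hypotheses of the proposition must be read off the graph: that $|\rho_i \rho_{i+1}| > 2$ for every consecutive pair, which is immediate from the adjacency of consecutively labelled edges, and that $\rho_0 \in \langle \rho_0\rho_2, \rho_3 \rangle$. For the latter I would identify a consecutive product of odd order at the reduction corner, the natural candidate being an order-$3$ product coming from a triangle of edges, so that the ``furthermore'' clause of Proposition~\ref{lem:RR} applies automatically. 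The reduced group $\langle \rho_1, \rho_0\rho_2, \rho_3, \ldots, \rho_r \rangle$ is then a string C-group of rank $r$, and relabelling its generators reproduces the graph $Sy(r,k)$.

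The main obstacle I anticipate is purely bookkeeping at the reduction corner: one must verify that the reindexed generators $(\rho_1,\, \rho_0\rho_2,\, \rho_3,\, \ldots,\, \rho_r)$ of the reduced group literally coincide with the labelled edges of the $Sy(r,k)$ picture, in particular that $\rho_0\rho_2$ has the correct support to serve as the new low-index generator, and that the odd-order hypothesis genuinely sits at that corner rather than at the opposite end, which is precisely what dictates whether the plain or the dual form of Proposition~\ref{lem:RR} is the right tool. Once the corner is matched and the odd order confirmed, everything else is inherited: the group is a string C-group by Proposition~\ref{lem:RR}, and its isomorphism type follows from the inductive hypothesis because rank reduction leaves the underlying permutation group unchanged. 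Should the corner fail to cooperate, the fallback is an $Sm$-style argument via Proposition~\ref{lem:max}, taking $\Gamma_0 \cong Sy(r-1,k)$ and recognizing $\Gamma_{r-1}$ as a sesqui-extension of $Sp(r-1,k)$, but I expect the rank-reduction route to be the cleaner one.
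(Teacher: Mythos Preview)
Your proposal is correct and matches the paper's proof exactly: induction on $k$ in steps of $2$, with base case $k=0$ supplied by Lemma~\ref{lem:Sm} and inductive step obtained by applying Proposition~\ref{lem:RR} to $Sy(r+1,k-2)$. The paper resolves your hedge by specifying that it is the \emph{dual} rank reduction that is used, and it records the constraint $r+1 \geq 7$ (which is exactly why the lemma is stated for $r \geq 6$) so that the reduction corner sits safely in the tail of the graph.
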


\begin{proof}
This can be shown by induction on $k$.  When $k=0$ this representation is given in Lemma~\ref{lem:Sm}.  Assuming by induction that $Sy(r,k)$ is a string C-group, it follows that $Sy(r,k+2)$ is a string C-group by applying the dual rank reduction of Proposition~\ref{lem:RR} to $Sy(r+1,k)$; in order to apply the dual rank reduction, we need that $r+1 \geq 7$.
\end{proof}


 \begin{lemma}\label{lem:L(r,k)}
For each rank $r \geq 6$ the permutation group $L(r,k)$ given by the following graph is a string C-group isomorphic to $S_{r+3+k}$, for all odd $k \geq 0$, and 
$$\includegraphics[width=6in]{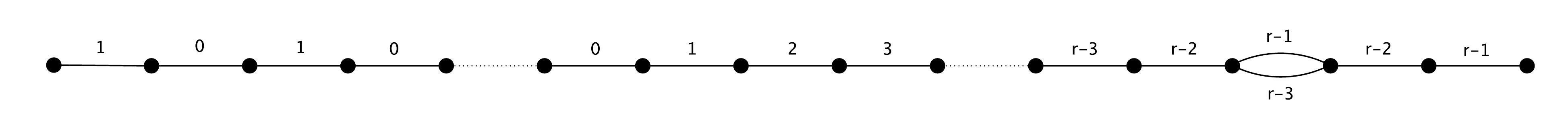}$$
for each rank $r \geq 6$ the permutation group $L(r,k)$ given by the following graph a string C-group isomorphic to $S_{r+3+k}$, for all even $k \geq 0$. 
$$\includegraphics[width=6in]{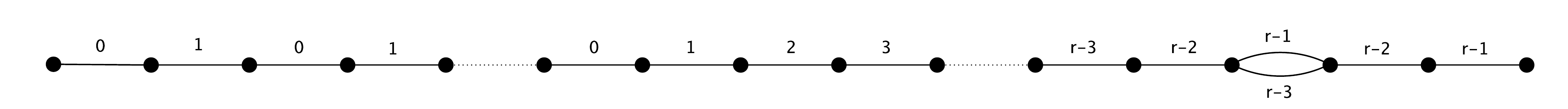}$$
 Furthermore $L_{r-1}$ is isomorphic to $S_{r+2+k}$.
\end{lemma}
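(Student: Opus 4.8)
The plan is to apply Proposition~\ref{lem:max}, letting the facet carry most of the work so that the string C-group property, the isomorphism type, and the ``Furthermore'' clause all fall out together. Writing $\Gamma = \langle \rho_0, \ldots, \rho_{r-1}\rangle = L(r,k)$, the key observation is that deleting the $\rho_{r-1}$-edges from the pictured graph should leave exactly the graph of $R(r-1,k)$ from Lemma~\ref{lem:R}. Hence $\Gamma_{r-1} = R(r-1,k)$ is a string C-group isomorphic to $S_{(r-1)+3+k} = S_{r+2+k}$, which is the asserted facet. This identification also accounts for the hypothesis $r \geq 6$: it is precisely the range in which $R(r-1,k)$ exists, since Lemma~\ref{lem:R} requires rank $r-1 \geq 5$.

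Next I would read the vertex figure $\Gamma_0 = \langle \rho_1, \ldots, \rho_{r-1}\rangle$ off the graph; deleting the $\rho_0$-edges should reveal the permutation representation graph of one of the building blocks already established in this section, so that $\Gamma_0$ is a string C-group by the corresponding lemma. To identify $\Gamma$ itself, observe that in the degree-$(r+3+k)$ action the facet $R(r-1,k)$ fixes the single vertex moved by $\rho_{r-1}$ and is therefore a point stabilizer isomorphic to $S_{r+2+k}$. Adjoining $\rho_{r-1}$ makes $\Gamma$ transitive, and maximality of the point stabilizer forces $\Gamma$ to be primitive; a primitive group containing $S_{r+2+k}$ as a point stabilizer is either $S_{r+3+k}$ or $A_{r+3+k}$, and the odd permutation visible in the graph excludes the alternating case, giving $\Gamma \cong S_{r+3+k}$.

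It then remains to verify the hypothesis of Proposition~\ref{lem:max}, that $\Gamma_0 \cap \Gamma_{r-1} = \Gamma_{0,r-1}$. The cleanest route is the maximality clause: the doubly-reduced group $\Gamma_{0,r-1} = \langle \rho_1, \ldots, \rho_{r-2}\rangle$ is the facet of $\Gamma_0$, and I would check that it is maximal there---most likely because, as for the other $S_n$-type families of this section, this facet is again a point stabilizer $S_{r+1+k}$ inside $\Gamma_0 \cong S_{r+2+k}$, hence maximal. Proposition~\ref{lem:max} then yields that $\Gamma$ is a string C-group, completing the argument.

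\textbf{The main obstacle} is entirely combinatorial: matching edge labels to identify the vertex figure $\Gamma_0$ with the correct earlier building block, and confirming that $\Gamma_{0,r-1}$ sits inside it as a maximal point-stabilizer subgroup. Both steps are routine once the explicit graphs are consulted, but they rely on that pictorial data. As an alternative that sidesteps the analysis of $\Gamma_0$, one can fix the degree $n = r+3+k$ and induct on $k$ exactly as in Lemmas~\ref{lem:R} and~\ref{lem:Sy}: treat $k=0$ as the base case (handled by the Proposition~\ref{lem:max} argument above) and obtain $L(r,k)$ from $L(r+1,k-1)$ by a single dual rank reduction (Proposition~\ref{lem:RR}), where $r \geq 6$ again supplies the needed rank $r+1 \geq 7$.
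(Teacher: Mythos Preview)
Your alternative at the end is essentially the paper's argument: the proof is a short induction on $k$, with the base case $k=0$ simply quoted from \cite{Extension_n-4} (not re-established via Proposition~\ref{lem:max}), and the inductive step obtained by applying Proposition~\ref{lem:RR} to $L(r+1,k)$ to produce $L(r,k+1)$. Two discrepancies with your sketch: the paper uses the \emph{ordinary} rank reduction here, not the dual one; and the hypothesis $r\geq 6$ enters through the cited base case rather than through an ``$r+1\geq 7$'' requirement for the reduction step. The ``Furthermore'' clause is likewise handled by rank reduction, traced back to $FL(r-1,0)$, not via any identification of the facet with $R(r-1,k)$.

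Your primary approach through Proposition~\ref{lem:max} is a genuinely different route, and the paper does not take it. Two of the guesses underpinning it look unsafe. First, the paper never identifies $L_{r-1}(r,k)$ with $R(r-1,k)$; both are rank-$(r-1)$ string C-groups isomorphic to $S_{r+2+k}$, but that does not make their permutation representation graphs coincide---the paper relates $L_{r-1}(r,k)$ to $FL(r-1,0)$, whereas $R(r-1,0)$ is the dual of $FL(r-1,2)$---so this identification would need an independent check against the figures. Second, your picture of $\Gamma_0$ as a single symmetric group with $\Gamma_{0,r-1}$ sitting inside as a maximal point stabilizer is unlikely to hold: deleting the $\rho_0$-edges from a graph that carries an alternating $0/1$ tail of length $k+2$ disconnects that tail, so $\Gamma_0$ is more plausibly a direct product or a sesqui-extension (compare how $\Gamma_0$ is analysed in Lemmas~\ref{lem:P} and~\ref{lem:Sl(r,k)}), and the maximality argument would have to be reworked accordingly.
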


To clarify, we note that there are $k+2$ edges of labels either 0 or 1 in each such graph.

\begin{proof}
This can be shown by induction on $k$.  When $k=0$ this representation was shown to be a string C-group for all $r\geq 6$ in~\cite{Extension_n-4}.  Assuming by induction that $L(r,k)$ is a string C-group, it follows that $L(r,k+1)$ is a string C-group by applying the rank reduction of Proposition~\ref{lem:RR} to $L(r+1,k)$.  The structure of $L_{r-1}(r,k)$ follows, again using Proposition~\ref{lem:RR}, from its relationship to $FL(r-1,0)$ from Lemma~\ref{lem:FL}.
\end{proof}


 \begin{lemma}\label{lem:M}
For each rank $r \geq 6$ the permutation group $M(r,k)$ given by the following graph is a string C-group, for all $k \equiv 2 \pmod{4}$, with $k \geq 0$, and  
$$\includegraphics[width=6in]{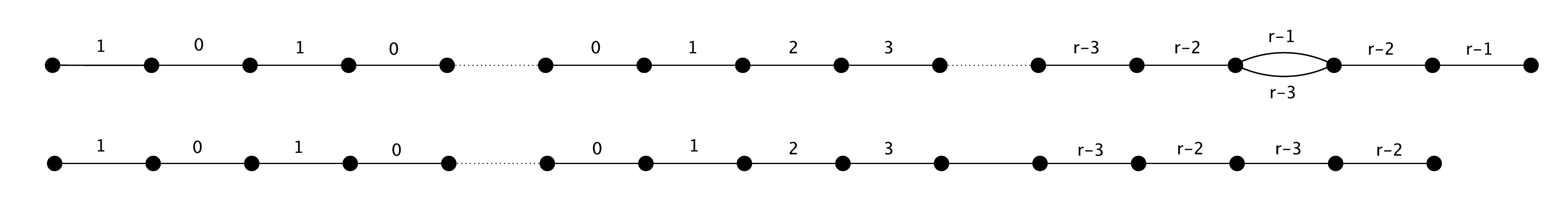}$$
for each rank $r \geq 6$ the permutation group $M(r,k)$ given by the following graph is a string C-group, for all $k \equiv 0 \pmod{4}$, with $k \geq 0$.  
$$\includegraphics[width=6in]{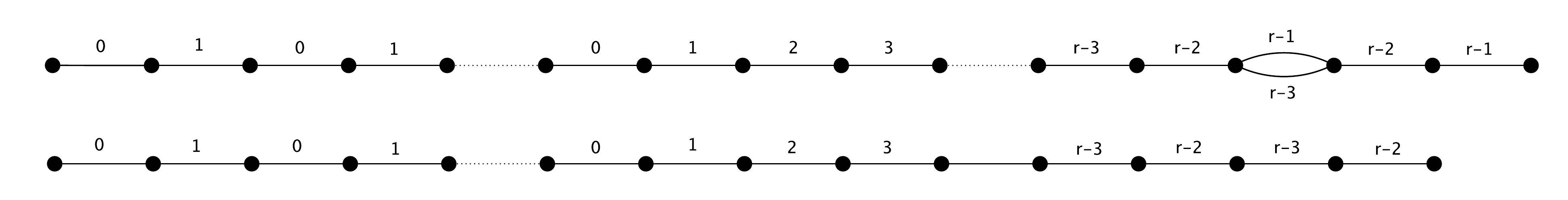}$$

\end{lemma}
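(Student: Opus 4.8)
The plan is to prove that $M(r,k)$ is a string C-group by induction on $k$, exactly paralleling the arguments for $Sy(r,k)$ in Lemma~\ref{lem:Sy} and $L(r,k)$ in Lemma~\ref{lem:L(r,k)}. Since the two displayed graphs correspond to the cases $k\equiv 2$ and $k\equiv 0 \pmod 4$, a single inductive step that raises $k$ by $2$ must carry one graph type to the other; so the natural induction increments $k$ by two at a time and alternates between the two pictures, with the case $k\equiv 0$ supplying the base value $k=0$.

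For the inductive step I would assume that $M(r+1,k)$ is a string C-group and apply the rank reduction of Proposition~\ref{lem:RR} (to the dual, and then dualize again, as in the proof of Lemma~\ref{lem:Sy}) to obtain $M(r,k+2)$. As there, this requires $r+1 \geq 7$, i.e.\ $r \geq 6$, which is exactly the standing hypothesis, and it preserves the permutation degree while dropping the rank by one. To invoke Proposition~\ref{lem:RR} I would verify its hypotheses on $M(r+1,k)$: that $|\rho_i\rho_{i+1}|>2$ along the whole string, and that the relevant adjacent product (the image under the appropriate relabeling of $\rho_2\rho_3$ in the dualized graph) has odd order, so that the sufficient condition $\rho_0 \in \langle \rho_0\rho_2,\rho_3\rangle$ of the proposition is met automatically. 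These are local checks on the CPR graph and should be routine once the orders along the string are read off.

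The crux is the base case $M(r,0)$, which must be established for all $r \geq 6$ before the induction can start. Unlike $Sy(r,0)=Sm(r)$, there is no earlier lemma to quote verbatim, so I expect to run a secondary induction on $r$ via Proposition~\ref{lem:max}: identify the facet $\Gamma_{r-1}$ and the vertex-figure $\Gamma_0$ of $M(r,0)$ with string C-groups constructed earlier in this section (the natural candidates being one of the ``even'' blocks $Sy$, $Bl$, or $Sm$ on one side and a block such as $L$ or $P$, possibly as a sesqui-extension, on the other), show that $\Gamma_{0,r-1}$ is maximal in $\Gamma_0$ or in $\Gamma_{r-1}$, and conclude via Proposition~\ref{lem:max}. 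For the smallest rank $r=6$, a direct computation in Magma would anchor this secondary induction, just as $Sm(4)$ anchors Lemma~\ref{lem:Sm}.

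The main obstacle I anticipate is precisely this base case: correctly reading the isomorphism types of $\Gamma_0$ and $\Gamma_{r-1}$ off the graph and verifying the maximality of $\Gamma_{0,r-1}$, despite the fact that $M(r,k)$ is stated with no global isomorphism type. Because these subgroups are themselves mixes or sesqui-extensions of the earlier blocks, pinning down their structure sharply enough to apply the maximal-subgroup form of Proposition~\ref{lem:max}---rather than checking the full intersection property~\ref{intprop} by hand---is where the genuine work lies. By contrast, the inductive step is essentially mechanical once the order of the relevant adjacent pair is confirmed to be odd.
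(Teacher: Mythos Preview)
Your double-induction program is far more elaborate than what the paper actually does, and it overlooks the structural fact that makes this lemma immediate.  In the paper, $M(r,k)$ is by construction the mix $L(r,k)\mix L_{r-1}(r,k)$ of the string C-group $L(r,k)$ from Lemma~\ref{lem:L(r,k)} with its own facet.  Proposition~\ref{lem:MixFacet} (a polytope mixed with its facet is again a string C-group) then finishes the proof in a single line, uniformly in $r$ and $k$.  There is no induction, no base case to anchor, and no maximal-subgroup argument.

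Your route could in principle be made to work, but it buys you nothing and costs a great deal.  The base case $M(r,0)$ that you flag as ``where the genuine work lies'' would, if carried through, essentially amount to rediscovering the mix structure by hand (since $\Gamma_0$, $\Gamma_{r-1}$, and $\Gamma_{0,r-1}$ of a mix are themselves mixes of the corresponding subgroups of the factors).  A smaller technical point: because $M$ is assembled from copies of $L$, and $L$ is propagated in Lemma~\ref{lem:L(r,k)} by the \emph{ordinary} rank reduction of Proposition~\ref{lem:RR} (not the dual one you propose by analogy with $Sy$), your inductive step would also need to be adjusted before the local order checks could be verified.  The moral is to look first for a mixing identity whenever a CPR graph has two connected components; here that identity trivializes the lemma.
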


We note that permutation degree of $M(r,k)$ is $2r+5+k$, and there are $k+4$ edges of labels either 0 or 1 in such a graph.

\begin{proof}
The group $M(r,k)$ is the mix of $L(r,k)$ with $L_{r-1}(r,k)$.  Thus $M(r,k)$ is a string C-group by Proposition~\ref{lem:MixFacet}. 
\end{proof}


 \begin{lemma}\label{lem:Sl(r,k)}
For each rank $r \geq 6$ the permutation group $Sl(r,k)$ given by the following graph is a string C-group isomorphic to $S_{2r+1+k}$ , for $k \equiv 2 \pmod{4}$ with $k \geq 0$.  
$$\includegraphics[width=6in]{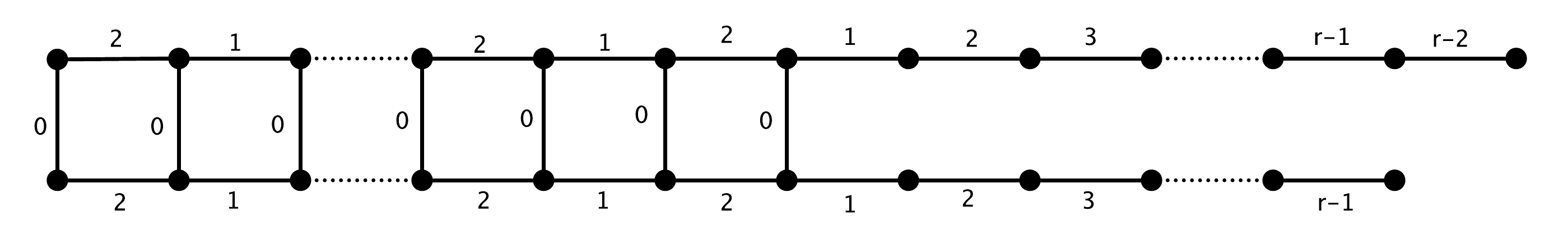}$$
\end{lemma}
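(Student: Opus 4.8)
The plan is to follow the template that Lemma~\ref{lem:Sp} established for the other single-symmetric-group building blocks of this section: since $Sl(r,k)$ is again isomorphic to a full symmetric group and carries the same ``doubling'' hypothesis $k \equiv 2 \pmod{4}$ that signals two interleaved ladders in the graph, I expect the cleanest route to be a mixing argument. First I would read off from the graph a decomposition $Sl(r,k) = P \mix Q$ as the mix of two string C-groups already built above; given the degree $2r+1+k$ and rank $r$, the natural candidates are one of the symmetric ladders (an $FL$, $P$, $R$, or $L$) overlaid on a shared vertex set with one of $Sp$, $Sy$, or a second ladder, exactly as the two copies of $L$ were overlaid to form $M$ in Lemma~\ref{lem:M}.

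To establish that $Sl(r,k)$ is a string C-group I would then verify the facet-covering hypothesis of Proposition~\ref{lem:MixMon}, namely that $P_{r-1}$ covers $Q_{r-1}$; because the facets of each factor have already been identified as explicit symmetric groups (or even subgroups of wreath products), this reduces to a routine comparison of the two facet isomorphism types, the larger surjecting onto the smaller. Should the graph instead be organized inductively rather than as a mix, the fallback is to identify $\Gamma_0$ and $\Gamma_{r-1}$ with smaller building blocks and invoke Proposition~\ref{lem:max}, using either the block/support argument of Lemma~\ref{lem:Sh} or the maximality argument of Lemma~\ref{lem:P} to force $\Gamma_0 \cap \Gamma_{r-1} = \Gamma_{0,r-1}$.

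The substantive work lies in pinning down the isomorphism type as the full symmetric group $S_{2r+1+k}$ rather than a proper subgroup. Mirroring Lemma~\ref{lem:Sp}, I would form $C = P \comix Q$, presenting it on the generators of $P$ together with all relations of $Q$, and exploit the clash in the edge-orders $|\rho_i\rho_{i+1}|$ between the two factors — a relation such as $(\rho_{r-1}\rho_{r-2})^3 = \eps$ in one factor against $(\rho_{r-1}\rho_{r-2})^4 = \eps$ in the other — to force consecutive generators to coincide and cascade this collapse down the whole chain, reducing $C$ to order at most two. This leaves $Sl(r,k)$ as either the full direct product or an index-two subgroup, whereupon a parity count exhibiting odd permutations in $P$, in $Q$, and in $P \mix Q$ excludes all of the index-two candidates, leaving only $S_{2r+1+k}$. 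The main obstacle I anticipate is engineering this comix cascade so that the relations inherited from both factors genuinely reach every $\rho_i$ — the hypothesis $k \equiv 2 \pmod{4}$ is presumably exactly what makes the two ladder parts of the graph incommensurable enough for the collapse to propagate — together with the parity bookkeeping needed to rule out each index-two subgroup of the relevant direct product.
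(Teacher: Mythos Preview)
Your primary plan does not fit this lemma. The permutation representation of $Sl(r,k)$ is transitive on its $2r+1+k$ points (it is the natural action of $S_{2r+1+k}$), so the graph is connected and cannot be read as two ladders on disjoint vertex sets overlaid in the manner of $Sp(r,k)$ or $M(r,k)$. There is therefore no decomposition $Sl(r,k)=P\mix Q$ with $P$ and $Q$ previously constructed string C-groups acting on complementary orbits, and the comix cascade you propose for pinning down the isomorphism type never gets off the ground. The hypothesis $k\equiv 2\pmod 4$ here controls the parity of the label-$0$ edges in the tail, not a doubling of the graph.

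The paper proceeds exactly along your fallback, but with identifications that carry the weight. It recognises $\Gamma_0$ as $Sp(r-1,\tfrac{k}{2})\cong S_{r+1+k/2}\times S_{r+k/2}$; since this intransitive subgroup is maximal in $S_{2r+1+k}$, one stroke both certifies $\Gamma_0$ as a string C-group and forces $\Gamma\cong S_{2r+1+k}$, replacing your comix argument entirely. The group $\Gamma_{r-1}$ is identified as a sesqui-extension of $Sh(r-1,k)$ and handled via Lemma~\ref{se}. Crucially, the intersection $\Gamma_0\cap\Gamma_{r-1}=\Gamma_{0,r-1}$ is \emph{not} obtained from maximality of $\Gamma_{0,r-1}$ --- that shortcut is unavailable here --- but by the orbit/block analysis you mention only in passing: any element of the intersection must respect the two $\Gamma_0$-orbits, the three $\Gamma_{r-1}$-orbits, and the size-two block system inherited from $Sh$, which squeezes it into $\Gamma_{0,r-1}\cong S_{r-1+k/2}\times S_2$. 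So the correct route is your fallback, but promoted to the main argument and executed with these specific building blocks.
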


We note that there are a total of $\frac{k}{2}+1$ edges of label 0 in such a graph.

\begin{proof}
Let $\Gamma = \langle \rho_0, \ldots, \rho_{r-1} \rangle = Sl(r,k)$.  
The group $\Gamma_0$ is a string C-group, as it is isomorphic to $Sp(r-1,\frac{k}{2})$ from Lemma~\ref{lem:Sp}, and thus $\Gamma_0$ is isomorphic to $S_{r+1+\frac{k}{2}} \times S_{r+\frac{k}{2}}$.   Since $\Gamma_0$ is maximal in  $S_{2r+1+k}$, we conclude that $\Gamma \cong S_{2r+1+k}$.

Let $Sh(r-1,k) = \langle a_0, \ldots, a_{r-2} \rangle$, from Lemma~\ref{lem:Sh}.  By part (2) of Lemma~\ref{se}, since $(a_{r-3} a_{r-2})^3 = 1$, we know that $\Gamma_{r-1} \cong Sh(r-1,k) \times S_2$, and thus $\Gamma_{r-1}$ is a string C-group isomorphic to $(S_2 \wr S_{r-1+\frac{k}{2}})^+ \times S_2$.  Notice that although $\Gamma_{r-1}$ is not transitive, it still has an imprimitive block structure, with blocks of size two, now with one more block, and also a block of size one.  It remains to show that $\Gamma_{r-1} \cap \Gamma_{0} \cong \Gamma_{0,r-1}$ which we will do by analyzing the orbits of $\Gamma_{r-1} \cap \Gamma_{0}$.

If $\alpha \in \Gamma_{r-1} \cap \Gamma_{0}$ then: $\alpha$ preserves the two orbits of $\Gamma_0$;  $\alpha$ preserves the three orbits of $\Gamma_{r-1}$, namely, it fixes the vertex of the graph which is incident to only an edge of label $r-1$, and it either fixes the vertex of the graph which is incident only to an edge of label $r-2$, or it it interchanges it with the other vertex on that edge of label $r-2$.  Finally, $\alpha$ preserves the block structure of $\Gamma_{r-1}$.  Thus, we can see $\alpha$ acting only on the blocks, since each block consists of two elements in different $\Gamma_0$ orbits.  Therefore, 
$\Gamma_{r-1} \cap \Gamma_{0} \leq (S_{r-1+\frac{k}{2}} \times S_2)$. It is easy to check that $\Gamma_{0,r-1} \cong (S_{r-1+\frac{k}{2}} \times S_2)$, and thus $\Gamma$ is a string C-group.

 \end{proof}


\section{Odd degree and high rank}\label{sec4}

In this section we deal with alternating groups of odd permutation degree, represented as string C-groups of rank at least seven.

\begin{theorem}\label{th:Steve}

For each $r \geq 7$ the permutation group $S(r,k)$ given by the following graph is a string C-group isomorphic to $A_{2r+1+k}$ for all $k \equiv 2 \pmod{4}$, with $k \geq 0$.  
$$\includegraphics[scale=.5]{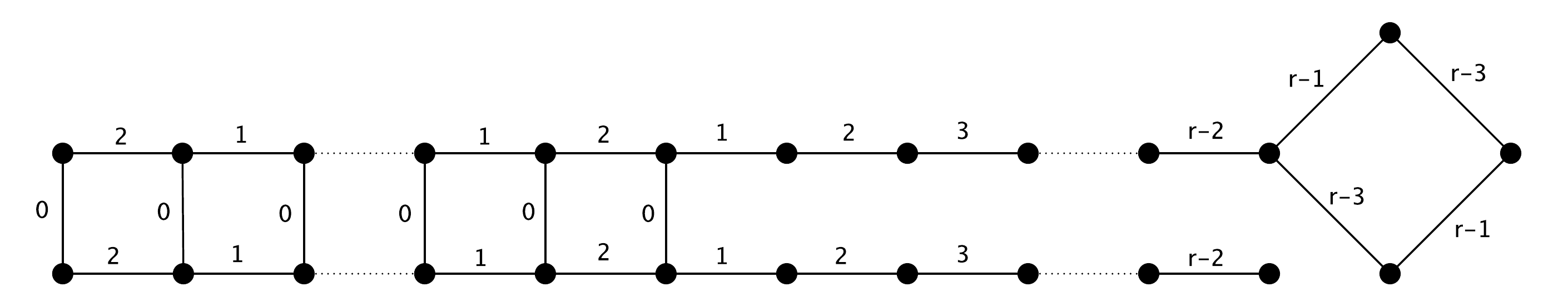}$$

\end{theorem}

\begin{proof}
Let $\Gamma = \langle \rho_0, \ldots, \rho_{r-1} \rangle = S(r,k)$.  The group $\Gamma_0 \cong Sy(r-1,k)$ is a string C-group isomorphic to $(S_{r-1+\frac{k}{2}} \times S_{r+2+\frac{k}{2}})^+$ by Lemma~\ref{lem:Sy}.
The group $\Gamma_{r-1} \cong P \diamond Q$ where $P = Sl(r-1,k)$ is a string C-group isomorphic to $S_{2r-1+k}$, and $Q$ is a single involution extending $\rho_{r-3}$.  Since $\Gamma_{r-1}$ only contains even permutations, by parts (3) and (4) of Lemma~\ref{se}, $\Gamma_{r-1}$ is a string C-group isomorphic to $S_{2r-1+k}$.
Finally, by Lemma~\ref{lem:Sp}, the group $\Gamma_{0,r-1}\cong Sp(r-2,k)$ is isomorphic to $S_{r+\frac{k}{2}} \times S_{r-1+\frac{k}{2}}$, which is maximal in $\Gamma_0$.  Thus $\Gamma$ is a string C-group by Lemma~\ref{lem:max}.

It is clear that $\Gamma$ is a subgroup of $A_{2r+1+k}$.  The main theorem of~\cite{LPMax} shows that $(S_{r-1+\frac{k}{2}} \times S_{r+2+\frac{k}{2}})^+$ is maximal in $A_{2r+1+k}$ and thus $\Gamma \cong A_{2r+1+k}$.

\end{proof}

\begin{theorem}\label{th:Barry}

For each $r \geq 7$ the permutation group $B(r,k)$ given by the following graph is a string C-group isomorphic to $A_{2r+3+k}$ for all $k \equiv 2 \pmod{4}$, with $k \geq 0$.  
  
$$\includegraphics[scale=.44]{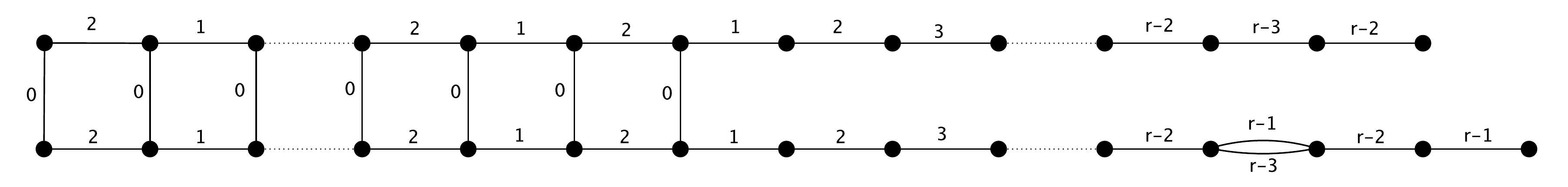}$$

\end{theorem}

\begin{proof}
Let $\Gamma = \langle \rho_0, \ldots, \rho_{r-1} \rangle = B(r,k)$.

First let us show that $\Gamma$ isomorphic to $A_{2r+3+k}$.  This can be checked using {\sc Magma} for $B(7,2)$, $B(7,6)$, $B(8,2)$, and $B(9,2)$ where $n= 2r+3+k \leq 23$.  For all the other cases, $n > 24$, and we can use Corollary 1.2 of~\cite{Maroti}.  In these cases we know that, $ | \Gamma | < 2^{n}$ or $\Gamma \cong A_{n}$.  The group $\Gamma_0$ contains a symmetric group acting on one its orbits, and thus $ | \Gamma |   > (\frac{n+1}{2})!$; however for all $n \geq 25$,  $(\frac{n+1}{2})! > 2^n$, and so $\Gamma \cong A_n$.

The group $\Gamma_0$ is a string C-group $M(r-1,k)$ from Lemma~\ref{lem:M}.  The group $\Gamma_{r-1}$ is a string C-group $Bl(r-1,k)$ by Lemma~\ref{lem:Bl}.  Note that although $\Gamma_{r-1}$ is not transitive, it still has an imprimitive block structure, with blocks of size two, and also a block of size one. 

It remains to show that $\Gamma_{r-1} \cap \Gamma_{0} \cong \Gamma_{0,r-1}$ which we will do by analyzing the orbits of $\Gamma_{r-1} \cap \Gamma_{0}$.   If $\alpha \in \Gamma_{r-1} \cap \Gamma_{0}$ then: $\alpha$ preserves the two orbits of $\Gamma_0$; $\alpha$ preserves the block structure of $\Gamma_{r-1}$, and $\alpha$ preserves the fixed point of $\Gamma_{r-1}$.  Thus $\Gamma_{r-1} \cap \Gamma_{0}$ can be seen acting on the $r+1+\frac{k}{2}$ blocks of size two, and is isomorphic to a subgroup of $S_{r+1+\frac{k}{2}}$.  

Finally, the group $\Gamma_{0,r-1} \cong P \diamond P$ where $P$ is a string C-group $R(r-2,\frac{k}{2})$ isomorphic to $S_{r+1+\frac{k}{2}}$, by Lemma~\ref{lem:R}, and thus $\Gamma$ is a string C-group.

\end{proof}

\begin{corollary}\label{lem:OddDone}
For each $r \geq 7$, and each $n \geq 2r+1$, there is a string C-group representation of $A_n$ of rank $r$.
\end{corollary}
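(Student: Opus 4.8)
The plan is to prove Corollary~\ref{lem:OddDone} by combining the two preceding theorems, which together cover all the required degrees. Observe that Theorem~\ref{th:Steve} produces a rank-$r$ string C-group representation of $A_{2r+1+k}$ for every $k \equiv 2 \pmod 4$ with $k \geq 0$, that is, for $k \in \{2, 6, 10, \ldots\}$, while Theorem~\ref{th:Barry} produces a rank-$r$ representation of $A_{2r+3+k}$ for the same set of $k$ values. First I would rewrite these two families in terms of the target degree $n$ directly, so that the arithmetic of which residues are hit becomes transparent. For a fixed rank $r \geq 7$, the degrees realized by Theorem~\ref{th:Steve} are $n = 2r+1+k$ with $k \equiv 2 \pmod 4$, i.e.\ $n \in \{2r+3,\, 2r+7,\, 2r+11, \ldots\}$, and the degrees realized by Theorem~\ref{th:Barry} are $n = 2r+3+k$ with $k \equiv 2 \pmod 4$, i.e.\ $n \in \{2r+5,\, 2r+9,\, 2r+13, \ldots\}$.

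The key step is then an interleaving argument: these two arithmetic progressions, both with common difference $4$, together cover every integer $n \geq 2r+3$ of the correct parity, and in fact cover \emph{all} sufficiently large $n$. Concretely, the first family hits $n \equiv 2r+3 \pmod 4$ and the second hits $n \equiv 2r+5 \equiv 2r+1 \pmod 4$; since $2r+3$ and $2r+1$ differ by $2$, these two residue classes modulo $4$ are exactly the two classes congruent to $2r+1 \pmod 2$, i.e.\ both families produce degrees of the same parity as $2r+1$. Thus the union of the two progressions yields every integer $n \geq 2r+3$ with $n \equiv 2r+1 \pmod 2$. To also capture the opposite parity, I would invoke the dual rank reduction or, more directly, the fact that the building-block lemmas were stated for both parities of $k$; here the cleanest route is to note that one of the two theorems can be applied with the appropriate $k$ to reach any given $n \geq 2r+3$ by choosing $k = n - 2r - 1$ or $k = n - 2r - 3$ according to whichever makes $k \equiv 2 \pmod 4$.

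I would organize the verification as a short case analysis on $n \bmod 4$: given $n \geq 2r+1$, set $k = n - 2r - 1$ if $n \equiv 2r+3 \pmod 4$ and apply Theorem~\ref{th:Steve}, or set $k = n - 2r - 3$ if $n \equiv 2r+5 \pmod 4$ and apply Theorem~\ref{th:Barry}; in each case the chosen $k$ is a nonnegative integer congruent to $2 \pmod 4$, so the hypotheses of the relevant theorem are met and we obtain the desired rank-$r$ representation of $A_n$. The small boundary values of $n$ (namely $n = 2r+1$ and $n = 2r+2$, which fall just below the ranges $2r+3$ and $2r+5$) must be handled separately; I expect these edge cases to be the main obstacle, since the two infinite families as stated start at $2r+3$, and so the stray smallest degrees either require a direct construction, an appeal to an earlier building-block lemma such as Lemma~\ref{lem:Sl(r,k)} or Lemma~\ref{lem:Sy} specialized to small parameters, or a {\sc Magma} verification analogous to those already used in the proof of Theorem~\ref{th:Barry}. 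Once those few base degrees are pinned down, the corollary follows immediately from the interleaving of the two progressions.
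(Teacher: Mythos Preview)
Your overall strategy matches the paper's: interleave the two arithmetic progressions from Theorems~\ref{th:Steve} and~\ref{th:Barry}, and deal with the bottom degree $n=2r+1$ separately. The paper's proof is literally three sentences doing exactly this.

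There are two points where your proposal drifts off course. First, the parity discussion is a red herring. This corollary lives in the section titled ``Odd degree and high rank'' and is only meant (and only used, in Section~\ref{sec7}) for odd $n$; even degrees are handled in Section~\ref{sec5} via Theorem~\ref{lem:D} and Corollary~\ref{lem:EvenDone}. Your attempt to ``capture the opposite parity'' by choosing $k=n-2r-1$ or $k=n-2r-3$ cannot work anyway: for even $n$ both of those numbers are odd, so neither is $\equiv 2 \pmod 4$, and Theorems~\ref{th:Steve} and~\ref{th:Barry} simply do not produce any even degrees. So drop the parity case entirely; once you restrict to odd $n \geq 2r+3$, your two progressions already cover everything and the argument is clean.

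Second, the boundary case. Only $n=2r+1$ is missing (not $n=2r+2$, which is even). None of the building-block lemmas in this paper handle it, and a {\sc Magma} check would not suffice since $r$ ranges over all integers $\geq 7$. The paper resolves it by citing an external result, Theorem~7.2 of~\cite{Alt2}, which constructs a rank-$r$ string C-group representation of $A_{2r+1}$ for the relevant $r$. With that citation in hand and the parity detour removed, your proof is essentially identical to the paper's.
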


\begin{proof}
When $n=2r+1$ this follows from Theorem 7.2 of~\cite{Alt2}.  When $n=2r+3+4j$ for some integer $j$, it follows from Theorem~\ref{th:Steve}.  Finally, when $n=2r+5+4j$ for some integer $j$, it follows from Theorem~\ref{th:Barry}.  
\end{proof}


\section{Even degree and high rank}\label{sec5}

In this section we deal with alternating groups of even permutation degree, represented as string C-groups of rank  at least seven.   

 \begin{theorem}\label{lem:D}
For each rank $r \geq 6$ the permutation group $D(r,k)$ given by the following graph is a string C-group isomorphic to $A_{2r+2+k}$, for all $k \equiv 2 \pmod{4}$, with $k \geq 0$, and  
$$\includegraphics[width=6in]{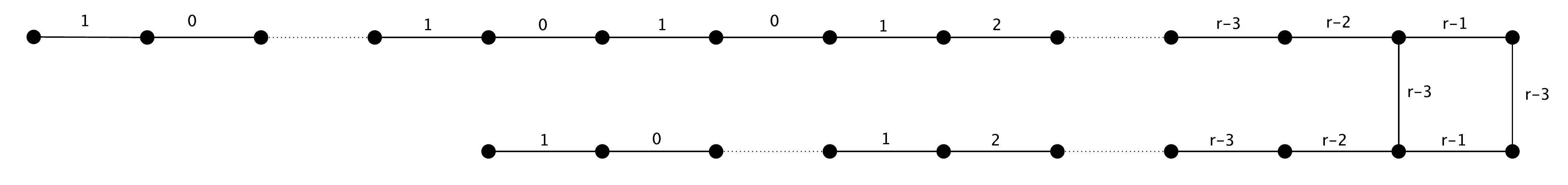}$$
for each rank $r \geq 6$ the permutation group $M(r,k)$ given by the following graph is a string C-group isomorphic to $A_{2r+2+k}$, for all $k \equiv 0 \pmod{4}$, with $k \geq 0$.  
$$\includegraphics[width=6in]{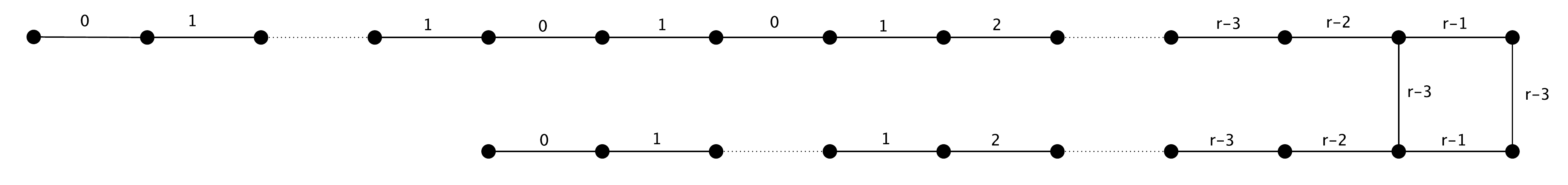}$$
\end{theorem}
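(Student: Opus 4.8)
The plan is to mirror the strategy of Lemma~\ref{lem:Sy} and Theorems~\ref{th:Steve} and~\ref{th:Barry}: reduce the whole family to a single base level by the dual rank reduction, and settle that base level with Proposition~\ref{lem:max} together with an identification of the full group. Concretely, I would induct on $k$ in steps of two. The key point is that the dual rank reduction of Proposition~\ref{lem:RR} should carry $D(r+1,k)$ to $D(r,k+2)$, exactly as it carries $Sy(r+1,k)$ to $Sy(r,k+2)$ in Lemma~\ref{lem:Sy}; since that reduction produces an isomorphic group of one lower rank, the isomorphism type is preserved, giving $D(r,k+2) \cong D(r+1,k) \cong A_{2(r+1)+2+k} = A_{2r+2+(k+2)}$. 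The odd-order condition needed to invoke Proposition~\ref{lem:RR} should be read off the graph (it is the dual analogue of ``$\rho_2\rho_3$ has odd order''), and the hypothesis $r \geq 6$ supplies the $r+1 \geq 7$ that the reduction requires. Everything therefore comes down to the base level $k=0$: for every $r \geq 6$, the group $D(r,0)$ is a string C-group isomorphic to $A_{2r+2}$.

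For the base case I would argue as in Theorems~\ref{th:Steve} and~\ref{th:Barry}. I expect to identify $\Gamma_0$ and $\Gamma_{r-1}$ with building blocks from Section~\ref{sec3}: the vertex figure should be a group of the form $(S_a \times S_b)^+$ coming from $Sy$ or $Sm$, and the facet should be an imprimitive wreath-type group from $Bl$ or $Sl$, possibly with a fixed point owing to the even-degree bookkeeping. Each is already known to be a string C-group by the relevant lemma, so the work is to verify $\Gamma_0 \cap \Gamma_{r-1} = \Gamma_{0,r-1}$. I would do this either by exhibiting $\Gamma_{0,r-1}$ as a maximal subgroup of $\Gamma_0$, or, when that fails, by the orbit-and-block analysis used in Lemma~\ref{lem:Sl(r,k)} and Theorem~\ref{th:Barry}: an element of $\Gamma_0 \cap \Gamma_{r-1}$ must respect both the orbit decomposition of $\Gamma_0$ and the block structure (and any fixed point) of $\Gamma_{r-1}$, which pins it down to act only on the blocks and hence to lie in $\Gamma_{0,r-1}$. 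Proposition~\ref{lem:max} then yields that $D(r,0)$ is a string C-group.

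The main obstacle, and the genuinely new content, is proving that the base group is the \emph{full} alternating group rather than a proper subgroup. Since every generator is an even permutation we immediately have $D(r,0) \leq A_{2r+2}$. To obtain equality I would follow Theorem~\ref{th:Barry}: check the finitely many small ranks directly in {\sc Magma}, and for the remaining large $r$ apply Corollary 1.2 of~\cite{Maroti}, which forces $|\Gamma| < 2^{n}$ or $\Gamma \cong A_n$ for $n = 2r+2$; because $\Gamma_0$ contains a symmetric group acting on roughly half of the $n$ points, $|\Gamma|$ exceeds $2^{n}$ for all sufficiently large $r$, eliminating the first alternative. The delicate points I anticipate are that the even-degree counting margin is tighter than in the odd case, so the range of ranks handled by {\sc Magma} must be chosen carefully; that primitivity must be in force before Maroti's bound is applied; and that the precise block structure differs between the $k \equiv 0$ and $k \equiv 2 \pmod 4$ pictures, so the orbit analysis of the previous paragraph should be sanity-checked against both graphs even though the induction formally only needs $k=0$. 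Alternatively, if a suitable maximal subgroup of the form $(S_a \times S_b)^+$ can be located inside $\Gamma_0$, the cleaner route of Theorem~\ref{th:Steve} via the main theorem of~\cite{LPMax} would avoid the counting estimates entirely.
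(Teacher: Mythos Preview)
Your inductive step is exactly what the paper does: obtain $D(r,k+2)$ from $D(r+1,k)$ via Proposition~\ref{lem:RR}, preserving the isomorphism type and lowering the rank by one, with the constraint $r+1\geq 7$ ensuring the reduction is legal. One small correction: the paper uses the ordinary rank reduction here, not the dual one, since the $0,1$-tail in $D(r,k)$ sits at the low-index end (as in Lemma~\ref{lem:R} and Lemma~\ref{lem:L(r,k)}), not the high-index end (as in Lemma~\ref{lem:Sy}).

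Where you diverge from the paper is the base case $k=0$. You propose to establish that $D(r,0)$ is a string C-group isomorphic to $A_{2r+2}$ from scratch, by identifying $\Gamma_0$ and $\Gamma_{r-1}$ with building blocks from Section~\ref{sec3}, running an orbit/block analysis for the intersection condition, and then invoking either Mar\'oti's bound or the Liebeck--Praeger--Saxl maximal subgroup classification to get the full alternating group. The paper does none of this: it simply cites Theorem~7.1 of~\cite{Alt2}, where $D(r,0)$ was already shown to be a string C-group isomorphic to $A_{2r+2}$ for all $r\geq 6$. So your plan is sound in spirit, but it reinvents a result that is available off the shelf; in particular, the building blocks of Section~\ref{sec3} were not designed with $D(r,0)$ in mind, and your hedged identifications (``should be a group of the form $(S_a\times S_b)^+$ coming from $Sy$ or $Sm$'', etc.) would require independent verification that the paper never performs. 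If you wish to keep your argument self-contained, you would effectively be reproducing the proof in~\cite{Alt2}; the cleaner move is to cite it.
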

To clarify this notation we both include an example $D(7,4)$ and remark that there are $k+4$ edges with labels 0 or 1 in each such graph.

\begin{figure}[h]
$$\includegraphics[width=6in]{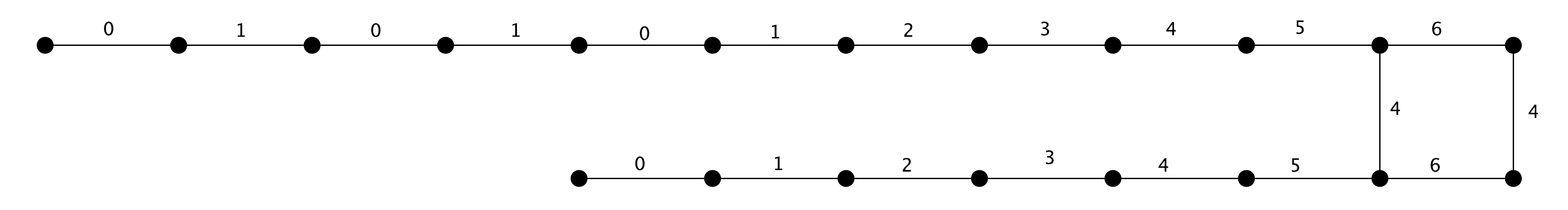}$$
\caption{ The group $D(7,4)$ \label{D(7,4)} }
\end{figure}

\begin{proof}
This is proved by induction on $k$.  When $k=0$, $D(r,k)$ was shown to be a string C-group isomorphic to $A_{2r+2}$ in Theorem 7.1 of~\cite{Alt2}.  To see that $D(r,k+2)$ is a string C-group isomorphic to $A_{2r+2+k+2}$, we notice that you obtain $D(r,k+2)$ from $D(r+1,k)$ using the rank reduction of Proposition~\ref{lem:RR}, which is assumed to be a string C-group isomorphic to $A_{2r+2+k+2}$ by induction.  We can apply the rank reduction to $D(r+1,k)$ as long as $r+1 \geq 7$.
\end{proof}

\begin{corollary}
\label{lem:EvenDone} For all $r \geq 7$, the alternating group $A_n$ can be represented as a rank $r$- string if $n$ is even and $n \geq 2r+2$.	
\end{corollary}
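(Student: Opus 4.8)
The plan is to read this off directly from Theorem~\ref{lem:D}, exactly as Corollary~\ref{lem:OddDone} was read off from Theorems~\ref{th:Steve} and~\ref{th:Barry} in the odd case. First I would fix $r \geq 7$ and an even integer $n \geq 2r+2$, and set $k := n - (2r+2)$. Because $n$ and $2r+2$ are both even, $k$ is a nonnegative even integer, and hence either $k \equiv 0 \pmod 4$ or $k \equiv 2 \pmod 4$.

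In each of these two cases exactly one of the two graphs of Theorem~\ref{lem:D} applies (the hypothesis $r \geq 6$ of that theorem is satisfied since $r \geq 7$), producing a rank $r$ string C-group $D(r,k)$ isomorphic to $A_{2r+2+k} = A_n$. As $k$ ranges over all nonnegative even integers, $2r+2+k$ ranges over all even integers $\geq 2r+2$, so every admissible even degree $n$ is realized as $A_n \cong D(r, n-2r-2)$ of the prescribed rank.

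The only thing to verify carefully is the bookkeeping: that $\{\,2r+2+k : k \text{ even},\ k \geq 0\,\}$ exhausts all even $n \geq 2r+2$ (immediate), and that the case split $k \equiv 0, 2 \pmod 4$ is precisely matched by the two pictures in Theorem~\ref{lem:D}. I do not expect a genuine obstacle here, because the substantive content — the string C-group property of $D(r,k)$ and its isomorphism type $A_{2r+2+k}$ — has already been established in Theorem~\ref{lem:D} and, inductively, in the building blocks of Section~\ref{sec3}. In short, the corollary is the observation that the single family $D(r,k)$ sweeps out every even permutation degree above the threshold $2r+2$.
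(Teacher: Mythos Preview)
Your proposal is correct and matches the paper's approach exactly: the corollary is stated immediately after Theorem~\ref{lem:D} without explicit proof, precisely because it follows by the bookkeeping you describe (set $k = n - 2r - 2 \geq 0$ even, split on $k \bmod 4$, and invoke the appropriate $D(r,k)$). There is nothing more to add.
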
	

\section{Low Ranks and Extension Construction} \label{sec6}

In this section we deal with alternating groups represented as rank 4, 5, or 6 string C-groups.

In~\cite{Nuzhin} it was showed that $A_n$ is generated by three involutions two of which commute if and only if $n \not \in \{3,4,6,7,8 \}$.  Additionally, in~\cite{DanielCPR}, permutation representations for the string C-groups for each of these $A_n$ were provided.  Thus it is known exactly which $A_n$ are rank 3 string C-groups.  

\begin{proposition}\label{th:r3} The alternating group $A_n$ can be represented as a rank 3 string C-group if and only if $n=5$ or $n \geq 9$.
\end{proposition}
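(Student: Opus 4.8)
The plan is to read the statement as a bridge between two known inputs: Nuzhin's generation theorem \cite{Nuzhin} supplies the necessary condition, while the explicit permutation representations of \cite{DanielCPR} supply the constructions for the converse. The observation linking them is that a rank $3$ string C-group is precisely a group $\langle \rho_0, \rho_1, \rho_2 \rangle$ satisfying \eqref{coxrel}, \eqref{string}, and \eqref{intprop}; here the string condition \eqref{string} forces $\rho_0$ and $\rho_2$ to commute, so any such group is in particular generated by three involutions two of which commute.

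For the ``only if'' implication I would discard the intersection property \eqref{intprop} and use only that $A_n$ is then generated by three involutions, two commuting. By Nuzhin's theorem this is impossible exactly when $n \in \{3,4,6,7,8\}$; since $A_n$ is trivial for $n \leq 2$ and $A_3$ contains no involution, the values that survive are precisely $n = 5$ and $n \geq 9$. This direction is short because the string C-group axioms are strictly stronger than Nuzhin's hypothesis.

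The ``if'' implication is where the real work sits, and it is also where the main obstacle lies: Nuzhin's theorem only guarantees a string group generated by involutions, not the intersection property, so one cannot simply reuse his generating triples. What is needed is, for every $n = 5$ or $n \geq 9$, a concrete triple $\rho_0,\rho_1,\rho_2$ for which \eqref{intprop} actually holds. In rank $3$ this reduces, via Proposition~\ref{lem:max}, to a single verification: the two rank-$2$ subgroups $\langle \rho_0, \rho_1 \rangle$ and $\langle \rho_1, \rho_2 \rangle$ are dihedral and hence automatically string C-groups, so it suffices that $\langle \rho_0, \rho_1 \rangle \cap \langle \rho_1, \rho_2 \rangle = \langle \rho_1 \rangle$. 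This is exactly what the CPR graphs of \cite{DanielCPR} provide: explicit faithful permutation representations of $A_n$, for each relevant $n$, in which the intersection property has been checked. Assembling these representations across all $n = 5$ and $n \geq 9$ completes the sufficiency, and hence the proposition. I expect essentially all of the genuine content to be concentrated here, in exhibiting honest string C-groups rather than mere sggi's for infinitely many degrees.
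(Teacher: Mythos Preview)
Your proposal is correct and matches the paper's approach exactly: the paper states the proposition immediately after a paragraph invoking Nuzhin~\cite{Nuzhin} for the necessary direction and the explicit CPR-graph constructions of~\cite{DanielCPR} for sufficiency, with no further argument. Your write-up simply fleshes out the same two citations with the observation that the string relation forces $\rho_0\rho_2=\rho_2\rho_0$ and that the intersection property is the extra content supplied by~\cite{DanielCPR}.
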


In order to show which $A_n$ can be represented as a rank $r$ string C-group (for $r = 4,5,6$) we mainly rely on the following construction, which is similar to one of Pellicer~\cite{PExt} or of Schulte~\cite{SExt}.

\begin{definition} \label{tail} 
Given an sggi $\Gamma$ whose permutation representation graph $X$ has vertices $[1,\ldots,m]$ and edges labeled $[1,\ldots,r-1]$, we define $\Gamma^t$ (for each $t>0$) as the group generated by involutions (with one more generator) whose permutation representation graph has vertices $[1,\ldots, m+t]$ and is obtained by adjoining an alternating path of length $t$ of edges labeled 0,1 to the vertex $m$ of $X$.
\end{definition}
\begin{figure}
$$\includegraphics[scale=.4]{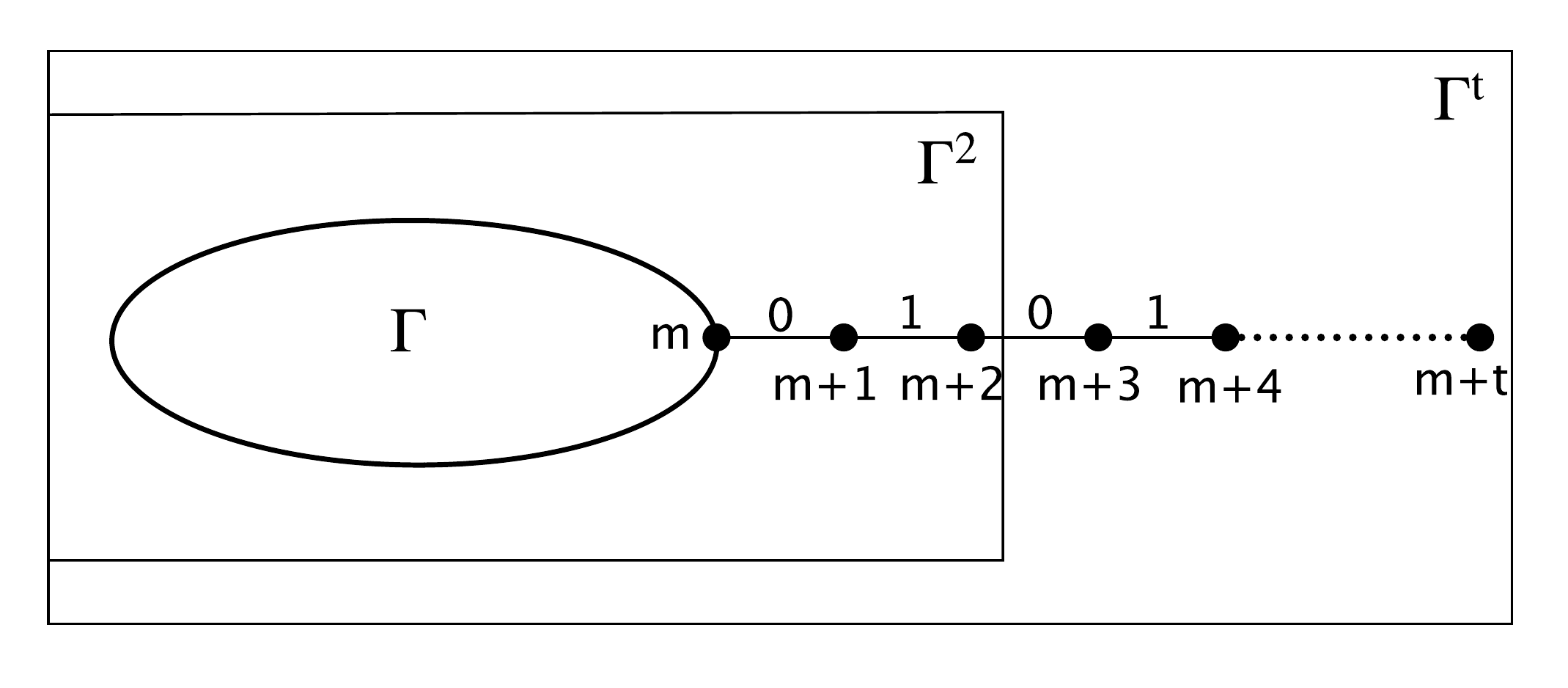}$$  
\caption{The group $\Gamma$ is an string group generated by involutions.  Then $\Gamma^t$ is a group with one more generator and larger permutation degree. \label{fig:Construction} }
\end{figure}

As a matter of notation, recall that for an sggi $G = \langle \rho_0, \ldots, \rho_{r-1} \rangle$ and an integer $k$ we define the groups $G_{\leq k}=\langle \rho_0, \ldots, \rho_{k} \rangle$.

\begin{proposition}
\label{lem:add4}
Let $\Gamma$ be an sggi acting on the points $[1,\ldots,m]$, and for each positive integer $t$ construct $\Gamma^t$ as described above.  If there is an integer $b \geq 2$ such that $\Gamma^b$ is a string C-group, and for each $2 \leq k \leq r-2$, the group $\Gamma^b_{\leq k}$ acts as a symmetric group on Orbit($\Gamma^b_{\leq k},m)$, then for each $t > b$, the group $\Gamma^t$ is also a string C-group.
\end{proposition}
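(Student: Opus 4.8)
The plan is to run two nested inductions and to verify the intersection property through the orbit structure, using Proposition~\ref{lem:max} with the extreme generators $\rho_0$ and $\rho_{r-1}$. A preliminary step, which I expect to be needed throughout, is to show that the symmetric-action hypothesis \emph{propagates}: for every $t \ge b$ and every $2 \le k \le r-2$, the group $\Gamma^t_{\le k}$ acts as the full symmetric group on $\mathrm{Orbit}(\Gamma^t_{\le k}, m)$. This I would prove by induction on $t$ from the given case $t=b$. Passing from $\Gamma^t$ to $\Gamma^{t+1}$ only lengthens the alternating $0,1$-tail by a single vertex $m+t+1$, which is attached to the orbit point $m+t$ through one of $\rho_0,\rho_1$ (both lie in $\Gamma^{t+1}_{\le k}$ since $k \ge 2$). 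This is exactly the kind of alternating-path computation underlying Lemma~\ref{lem:FL}: lengthening the tail enlarges the symmetric group through which $\langle \rho_0,\rho_1\rangle$ acts, and this enlargement remains compatible with the higher generators, so the action stays symmetric on the orbit $\mathrm{Orbit}(\Gamma^t_{\le k},m)\cup\{m+t+1\}$.

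With the symmetric action available at every tail length, I would prove the string C-group property by induction on the rank $r$, establishing the statement for all $t>b$ simultaneously. For the facet, note that the tail carries only labels $0$ and $1$ while $r-1 \ge 2$, so deleting $\rho_{r-1}$ leaves $(\Gamma^t)_{r-1} = (\Gamma_{r-1})^t$, the same tail construction applied to the lower-rank sggi $\Gamma_{r-1}=\langle\rho_1,\ldots,\rho_{r-2}\rangle$. This $\Gamma_{r-1}$ inherits all the hypotheses with the same $b$: its base $(\Gamma_{r-1})^b=(\Gamma^b)_{r-1}$ is a string C-group because it is a facet of the string C-group $\Gamma^b$, and the symmetric action for $2\le k\le r-3$ is a special case of the one already assumed. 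Hence $(\Gamma^t)_{r-1}$ is a string C-group by the induction on rank, the recursion bottoming out at rank $2$, where $\langle\rho_0,\rho_1\rangle$ is dihedral and automatically a string C-group. For the vertex figure, deleting $\rho_0$ severs the tail along its $0$-edges, and the surviving $1$-edges contribute, beyond a fixed $t$-independent core action, only a single commuting involution adjoined through $\rho_1$. Thus $(\Gamma^t)_0$ and $(\Gamma^b)_0$ are sesqui-extensions of one and the same base group, differing only in the number of transpositions making up that involution, so by Lemma~\ref{se} each is a string C-group exactly when the common base is; since $(\Gamma^b)_0$ is a string C-group as a vertex figure of $\Gamma^b$, so is $(\Gamma^t)_0$.

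The crux, and the step I expect to be the main obstacle, is the intersection condition $(\Gamma^t)_0 \cap (\Gamma^t)_{r-1} = (\Gamma^t)_{0,r-1}$, which is where the propagated symmetric action earns its keep. Writing $\Omega = \mathrm{Orbit}((\Gamma^t)_{r-1},m)$ and taking $k=r-2$ in the propagation claim, the facet $(\Gamma^t)_{r-1}$ acts on $\Omega$ as $\mathrm{Sym}(\Omega)$, and $\Omega$ contains the entire tail. Any $\alpha$ in the intersection lies in $(\Gamma^t)_0$ and therefore fixes every tail vertex severed by the removal of $\rho_0$; viewed inside $\mathrm{Sym}(\Omega)$, these fixed points confine $\alpha$ to the stabilizer of the tail, which is precisely the restriction of $(\Gamma^t)_{0,r-1}$ to the $X$-part of $\Omega$. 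The delicate point is that $\alpha$ could a priori still interchange the endpoints of one of the surviving external $1$-edges; ruling this out is what forces me to walk up through the symmetric actions for $k=2,3,\ldots,r-2$, using each successive symmetric group to pin down $\alpha$'s behaviour on the residual tail and show it must be trivial. Once $\alpha \in (\Gamma^t)_{0,r-1}$ is established, Proposition~\ref{lem:max} closes the argument.
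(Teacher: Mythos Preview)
Your architecture differs substantially from the paper's. The paper does not propagate the symmetric-action hypothesis to larger $t$, and it does not induct on the rank. Instead it uses part~(b) of Proposition~2E16 of~\cite{arp}: once one shows that $(\Gamma^t)_0$ is a string C-group and that $(\Gamma^t)_0 \cap (\Gamma^t)_{\le k} = \langle\rho_1,\ldots,\rho_k\rangle$ for every $k$, one is done, with no need ever to know that $(\Gamma^t)_{r-1}$ is a string C-group. The first item follows because $(\Gamma^t)_0$ and $(\Gamma^b)_0$ are isomorphic as generated groups (your sesqui-extension observation). For the second, the paper takes $\alpha$ in the intersection, normalises so that $\alpha$ fixes the excess tail, and transfers it via that isomorphism to $\alpha'\in(\Gamma^b)_0$; the symmetric-action hypothesis \emph{at level $b$} is invoked exactly once, to argue that the permutation $\alpha'$ (which agrees with $\alpha$ off the extra tail) can also be realised inside $(\Gamma^b)_{\le k}$. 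Then the already-known intersection property of $\Gamma^b$ forces $\alpha'\in\langle\rho_1,\ldots,\rho_k\rangle$ and hence $\alpha\in\langle\rho_1,\ldots,\rho_k\rangle$.

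Your route through Proposition~\ref{lem:max} is not unreasonable, but the intersection step as you describe it has a genuine gap, not just a sketch-level omission. Knowing that $(\Gamma^t)_{r-1}$ acts as $\mathrm{Sym}(\Omega)$ on $\Omega=\mathrm{Orbit}((\Gamma^t)_{r-1},m)$ constrains only $\alpha|_\Omega$; it says nothing about what $\alpha$ does on the other orbits of $(\Gamma^t)_{r-1}$, and you need $\alpha\in(\Gamma^t)_{0,r-1}$ globally, not merely agreement with some element of $(\Gamma^t)_{0,r-1}$ on $\Omega$. In addition, your claim that the tail-stabiliser in $\mathrm{Sym}(\Omega)$ coincides with the restriction of $(\Gamma^t)_{0,r-1}$ to $\Omega\cap[1,\ldots,m]$ is unjustified: the symmetric-action hypothesis is about $\langle\rho_0,\ldots,\rho_k\rangle$, not about $\langle\rho_1,\ldots,\rho_{r-2}\rangle$, so you have no reason to expect the latter to act as a full symmetric group on that set. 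The ``walk up through the symmetric actions for $k=2,3,\ldots,r-2$'' that you gesture at, if made precise, would essentially reconstruct the 2E16(b) verification the paper carries out --- and what is still missing is the crucial transfer back to $\Gamma^b$ so that the \emph{assumed} string C-group property of $\Gamma^b$ can be invoked. Without that step the argument does not close.
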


\begin{proof}
Assume that for some $b \geq 2$, $\Gamma^b = \langle \rho_0, \ldots, \rho_{r-1} \rangle$ is a string C-group.  Then, by the commuting relationship between the generators, only the elements $\rho_0$ and $\rho_1$ can have the point $m$ in their support, and thus $m$ is of degree 2 in the natural CPR graph of $\Gamma^b$.   Let $\Gamma^t = \langle \rho_0, \ldots, \rho_{r-1} \rangle$.   To show that $\Gamma^t$ is a string C-group we rely on part (b) of Proposition 2E16 of~\cite{arp}.  There is an isomorphism $\phi$ that maps generators of $\Gamma^t_0$ to generators of $\Gamma^b_0$, and thus $\Gamma^t_0$ is also a string C-group.  It remains to show that for each $k=0,\ldots,r-2$ that $\Gamma^t_0 \cap \Gamma^t_{\leq k} = \langle \rho_1, \ldots, \rho_{k} \rangle$.  

We first deal with small values of $k$. If $k=0$, it is clear by construction that $\rho_0 \not \in \Gamma^t_0$ and thus $\Gamma^t_0 \cap \langle \rho_0 \rangle = \langle 1 \rangle$.  Let $k=1$ and $\alpha \in  \Gamma^t_0 \cap \langle \rho_0, \rho_1 \rangle$.  We assume, without loss of generality, that $\alpha$ has an even number of factors of $\rho_1$ and thus (as it is in $ \Gamma^t_0$) fixes all points greater than or equal to $m$.  Since $\alpha \in  \langle \rho_0, \rho_1 \rangle$ with an even number of factors of $\rho_1$ it also fixes all points less than $m$.  Therefore $\alpha$ is identity and 
 $\Gamma^t_0 \cap \langle \rho_0, \rho_1 \rangle = \langle \rho_1 \rangle$.

Now assume that $k>2$ and let $\alpha \in \Gamma^t_0 \cap \Gamma^t_{\leq k}$ again fixing all points greater than or equal to $m$.  Consider $\alpha$ as a word in $\Gamma^t_0$. Using the isomorphism $\phi$, we get a word $\alpha' \in  \Gamma^b_0$ by changing all the factors of $\rho_j$ in $\alpha$ to $\rho_j$.  Under the conditions of the proposition, we assume that the group $\Gamma^b_{\leq k}$ acts as a symmetric group on Orbit($\Gamma^b_{\leq k},m)$.  Additionally, the action of $\alpha$ on all other orbits Orbit($\Gamma^t_{\leq k},p)$ is identical to the action of $\alpha'$ on Orbit($\Gamma^b_{\leq k},p)$, and thus any permutation in $\Gamma^t_{\leq k}$ that fixes the points greater than or equal to $m+b+1$ can also be written as a word in $\Gamma^b_{\leq k}$.     Thus the permutation associated with the word $\alpha'$ can also be written as a word in $\Gamma^b_{\leq k}$.

Finally, as $\Gamma^b$ is a string C-group, $\alpha' \in \Gamma^b_{\leq k}$ and $\alpha' \in \Gamma^b_0$ implies that 
$ \alpha' \in \langle \rho_1, \ldots, \rho_{k} \rangle$.  Again using the isomorphism $\phi$ we get $\alpha \in \langle \rho_1, \ldots, \rho_{k} \rangle$ as required.

\end{proof}

If one can find appropriate groups $\Gamma^b$ which fit the conditions of the proposition above, then these can be used to build new string C-groups of the same rank for groups of larger permutation degree.  

  In particular, for each rank $r \in \{4,5,6\}$, the proof that $A_n$ can be represented as a rank $r$ string C-group (for all large $n$) is done by giving examples of string C-groups $\Gamma^2$ (see Figures~\ref{rank4ex}, \ref{rank5ex}, and \ref{rank6ex}) that satisfy the conditions of Proposition~\ref{lem:add4}, and such that for some $j$, the group $\Gamma^j$ is isomorphic to $A_n$.  These examples then yield families of string C-groups $\Gamma^{j+4k} \cong A_{n+4k}$.  
  Thus, for each rank, we need to find an example of $\Gamma^{t} \cong A_{n}$ for each value of $n \mod 4$.

  \begin{theorem}\label{lem:low}
  The alternating group $A_n$ has a rank 4 string C-group representation if and only if $n=9$, $n=10$, or $n \geq 12$.  The alternating group $A_n$ has a rank 5 string C-group representation if and only if $n=10$, or $n \geq 12$.  Finally, The alternating group $A_n$ has a rank 6 string C-group representation if and only if $n=11$ or $n \geq 13$.  
  
  \end{theorem}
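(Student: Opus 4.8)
The plan is to establish each of the three biconditionals (ranks $4$, $5$, and $6$) by treating the existence (``if'') and nonexistence (``only if'') directions separately. Existence for all large $n$ will come from the tail construction of Definition~\ref{tail} combined with Proposition~\ref{lem:add4}, while nonexistence will come from the known classification of the string C-group representations of $A_n$ for $n \leq 11$, supplemented by the maximum-rank results of~\cite{HighestAn} for the one excluded value that lies outside that range.

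For the existence direction, the key is that Proposition~\ref{lem:add4} upgrades a single verified base case into an entire residue class. For each fixed rank $r \in \{4,5,6\}$ and each residue $i \in \{0,1,2,3\}$, I would exhibit a base graph whose tailed group $\Gamma^2$ (drawn in Figures~\ref{rank4ex}, \ref{rank5ex}, \ref{rank6ex}) is a string C-group isomorphic to an alternating group $A_{n_i}$ with $n_i \equiv i \pmod 4$, chosen with $n_i$ as small as possible. Two finite facts must be checked on the base graph, both with {\sc Magma}: that $\Gamma^2$ is a string C-group, and that $\Gamma^2_{\leq k}$ acts as a symmetric group on $\mathrm{Orbit}(\Gamma^2_{\leq k}, m)$ for every $2 \leq k \leq r-2$. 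Proposition~\ref{lem:add4} then yields that $\Gamma^t$ is a string C-group for all $t > 2$, so in particular each $\Gamma^{j+4k}$ is a string C-group for all $k \geq 0$.

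It remains to identify the isomorphism type along each family. Each group in the family is generated by even permutations, hence sits inside $A_N$ with $N = n_i + 4k$, and the subgroup $\Gamma^{j+4k}_0$ (or a facet) contains a full symmetric or alternating group acting on a large orbit; arguing as in the proof of Theorem~\ref{th:Barry}, the resulting lower bound on the order together with Corollary~1.2 of~\cite{Maroti} forces $\Gamma^{j+4k} \cong A_N$ for all sufficiently large $N$, with the finitely many small members checked directly. With the base degrees chosen minimally, the four progressions $n_i + 4\mathbb{Z}_{\geq 0}$ will cover precisely $\{9,10\} \cup \{n \geq 12\}$ for rank $4$, $\{10\} \cup \{n \geq 12\}$ for rank $5$, and $\{11\} \cup \{n \geq 13\}$ for rank $6$; the single value omitted in each case ($n = 11$ for ranks $4,5$ and $n = 12$ for rank $6$) is exactly the asserted gap. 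For the nonexistence direction, every excluded value except one lies in the range $n \leq 11$, where the representations of $A_n$ are completely classified, so nonexistence for those $(r,n)$ is read off that classification (or reconfirmed with {\sc Magma}); the sole remaining case, $A_{12}$ at rank $6$, is settled by~\cite{HighestAn}, which gives that the maximum rank of a string C-group representation of $A_{12}$ is $5$.

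The main obstacle I anticipate is twofold. First, one must engineer the base graphs so that the four residue progressions leave \emph{exactly} the prescribed single gap and no other value, which forces the base degrees $n_i$ to be minimal and requires the orbit hypothesis of Proposition~\ref{lem:add4} to hold genuinely in each of the twelve base configurations. Second, Proposition~\ref{lem:add4} only delivers the string C-group property, so the identification $\cong A_N$ must be supplied separately and uniformly along each infinite family; as in Theorem~\ref{th:Barry}, this rests on exhibiting a large known subgroup and invoking Maróti's bound, and care is needed to ensure the bound applies for every member of the progression rather than merely asymptotically.
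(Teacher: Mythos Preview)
Your overall strategy matches the paper's: use the tail construction of Definition~\ref{tail} together with Proposition~\ref{lem:add4} to produce, for each rank $r\in\{4,5,6\}$, four infinite families of string C-groups covering the residue classes mod~$4$, and handle the excluded small values by the known classification. Two tactical differences are worth flagging.

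First, you propose choosing base degrees \emph{minimally} so that the four progressions themselves hit exactly $\{9,10\}\cup\{n\geq 12\}$, $\{10\}\cup\{n\geq 12\}$, and $\{11\}\cup\{n\geq 13\}$ respectively. The paper does not do this: it simply quotes the full classification of string C-group representations of $A_n$ for $n\leq 14$ (from~\cite{high-rank-alternating} and~\cite{LMAlg}) to settle both existence and nonexistence for all small $n$, and then only needs the families to cover $n\geq 15$. Several of the paper's families in fact start well above the minimum (e.g.\ the rank~$4$ families begin at $9,15,16,18$), which is harmless precisely because the small cases are already tabulated. Your minimality requirement is not wrong, but it adds an engineering burden you do not need to carry. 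Relatedly, note a small slip: $\Gamma^2$ is only the base case for the intersection property in Proposition~\ref{lem:add4}; it need not itself be an alternating group (in the paper's first rank~$4$ family, $\Gamma^2$ acts on $8$ points with $\rho_0$ an odd permutation). The alternating group appears as $\Gamma^j$ for a specific $j>2$ where all generators become even.

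Second, for the identification $\Gamma^t\cong A_n$ you invoke Mar\'oti's order bound as in Theorem~\ref{th:Barry}. The paper instead arranges each family so that $(\rho_2\rho_1\rho_0\rho_1)^2$ is a $3$-cycle and $\langle\rho_0,\rho_1,\rho_2\rangle$ acts as a full symmetric group on the tail orbit; once the tail is longer than half the degree this forces primitivity, and a primitive group containing a $3$-cycle contains $A_n$. Both routes are valid, but the $3$-cycle argument is self-contained and gives the identification for all $t$ at once after a short finite check, avoiding the asymptotic caveat you yourself flag.
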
  
  
  \begin{proof}
  All of the string C-group representations of $A_n$ for $n \leq 10$ were classified in~\cite{high-rank-alternating}, with $n=11,12,13,$ and $14$ subsequently classified in~\cite{LMAlg}.    It remains to show that, for all $n \geq 15$, there are rank $4$, rank $5$, and rank $6$ string C-group representations of $A_n$.
  
  For each rank, by listing the generators $\rho_i$, we now give four examples of string C-groups $\Gamma^2$  that fit the conditions of Proposition~\ref{lem:add4}, and show which value of $t$ gives $\Gamma^{t} \cong A_{n}$.  To clarify the notation, we include an image of the first family below.
\begin{figure}[h]
$$\includegraphics[width=6in]{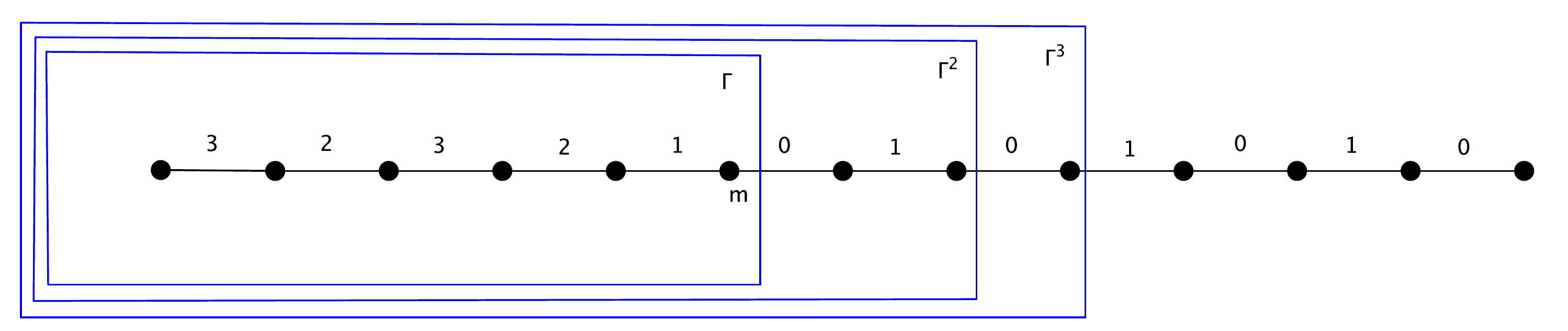}$$
\caption{$\Gamma$, $\Gamma^2$, $\Gamma^3$, and $\Gamma^7$ from Family 1 of rank 4 string C-group representations.   The group~$\Gamma^3$ is isomorphic to $A_{9}$, and the group~$\Gamma^7$ is isomorphic to $A_{13}$.  \label{fig:Family11} }
\end{figure}

\begin{figure}[h]
\begin{tabular}{ll}
\begin{tabular}{l}
Family 1: $\Gamma^{3+4k} \cong A_{9+4k}$.  \\
$\rho_0=(6,7).$\\
$\rho_1=(5,6)(7,8).$\\
$\rho_2=(2,3)(4,5).$\\
$\rho_3=(1,2)(3,4).$\\
\\
\end{tabular}
&
\begin{tabular}{l}
Family 2: $\Gamma^{3+4k} \cong A_{18+4k}$. \\
$\rho_0=(15,16).$\\
$\rho_1=(4,5)(6,7)(14,15)(16,17).$\\
$\rho_2=(1,2)(3,4)(6,8)(9,10)(11,12)(13,14).$\\
$\rho_3=(2,3)(4,6)(5,7)(8,9)(10,11)(12,13).$\\
\\
\end{tabular} 
\\
\begin{tabular}{l}
Family 3: $\Gamma^{4+4k} \cong A_{15+4k}$. \\
$\rho_0=(11,12).$\\
$\rho_1=(3,4)(5,7)(6,11)(10,9)(12,13).$\\
$\rho_2=(2,3)(4,6)(5,8)(7,10).$\\
$\rho_3=(1,2)(3,5)(4,7)(10,9).$\\
\end{tabular}
&
\begin{tabular}{l}
Family 4: $\Gamma^{4+4k} \cong A_{16+4k}$. \\
$\rho_0=(12,13).$\\
$\rho_1=(3,4)(5,7)(6,9)(10,12)(13,14).$\\
$\rho_2=(2,3)(4,6)(5,8)(7,10).$\\
$\rho_3=(1,2)(3,5)(4,7)(8,11).$\\
\end{tabular}
\end{tabular}
\caption{Rank groups $\Gamma^2$ satisfying the conditions of Lemma~\ref{lem:add4} \label{rank4ex}}
\end{figure}


\begin{figure}[h]
\begin{tabular}{ll}
\begin{tabular}{l}
Family 1: $\Gamma^{3+4k} \cong A_{13+4k}$. \hspace{1cm} \\
$\rho_0=(10, 11).$\\
$\rho_1=(9, 10)(11, 12).$\\
$\rho_2=(4, 5)(8, 9).$\\
$\rho_3=(1, 2)(3, 4)(5, 6)(7, 8).$\\
$\rho_4=(2, 3)(6, 7).$\\
\\
\end{tabular}
&
\begin{tabular}{l}
Family 2: $\Gamma^{3+4k} \cong A_{14+4k}$\\
$\rho_0= (11, 12).$\\
$\rho_1=(10, 11)(12, 13).$\\
$\rho_2=(5, 6)(9, 10).$\\
$\rho_3=(2, 3)(4, 5)(6, 7)(8, 9).$\\
$\rho_4= (1, 2)(3, 4)(5, 6)(7, 8).$\\
\\
\end{tabular} 
\\
\begin{tabular}{l}
Family 3: $\Gamma^{3+4k} \cong A_{15+4k}$. \\
$\rho_0=    (12, 13).$\\
$\rho_1=    (9, 12)(13, 14).$\\
$\rho_2=(3, 4)(5, 7)(6, 9)(10, 11).$\\
$\rho_3=(2, 3)(4, 6)(5, 8)(7, 10).$\\
$\rho_4= (1, 2)(3, 5)(4, 7)(10, 11).$\\
\end{tabular}
&
\begin{tabular}{l}
Family 4: $\Gamma^{3+4k} \cong A_{12+4k}$. \\
$\rho_0= (9, 10).$\\
$\rho_1=    (8, 9)(10, 11).$\\
$\rho_2=    (1, 2)(3, 4)(5, 6)(7, 8).$\\
$\rho_3=(2, 3)(6, 7).$\\
$\rho_4=(3, 5)(4, 6).$\\
\end{tabular}
\end{tabular}
\caption{Rank 5 groups $\Gamma^2$ satisfying the conditions of Lemma~\ref{lem:add4} \label{rank5ex}}
\end{figure}


\begin{figure}[h]
\begin{tabular}{ll}
\begin{tabular}{l}
Family 1: $\Gamma^{3+4k} \cong A_{17+4k}$. \hspace{1cm}  \\
$\rho_0=(14, 15).$\\
$\rho_1=(13, 14)(15, 16).$\\
$\rho_2= (6, 7)(12, 13).$\\
$\rho_3=(2, 4)(5, 6)(7, 8)(11, 12).$\\
$\rho_4=(1, 2)(3, 5)(8, 9)(10, 11).$\\
$\rho_5=(1, 3)(9, 10).$\\
\\
\end{tabular}
&
\begin{tabular}{l}
Family 2: $\Gamma^{3+4k} \cong A_{18+4k}$.\\
$\rho_0=(15,16).$\\
$\rho_1=  (14, 15)(16, 17).$\\
$\rho_2= (9, 12)(11, 14).$\\
$\rho_3= (3, 4)(5, 7)(6, 9)(8, 11).$\\
$\rho_4=(2, 3)(4, 6)(5, 8)(7, 10).$\\
$\rho_5= (1, 2)(3, 5)(4, 7)(10, 13).$\\
\\
\end{tabular} 
\\
\begin{tabular}{l}
Family 3: $\Gamma^{3+4k} \cong A_{15+4k}$. \\
$\rho_0=(12, 13).$\\
$\rho_1=(11, 12)(13, 14).$\\
$\rho_2=(2, 4)(5, 8)(6, 9)(10, 11).$\\
$\rho_3=(1, 2)(3, 6)(5, 8)(7, 10).$\\
$\rho_4=(2, 5)(3, 7)(4, 8)(6, 9).$\\
$\rho_5=(1, 3)(2, 6)(4, 9)(5, 8).$\\
\end{tabular}
&
\begin{tabular}{l}
Family 4: $\Gamma^{3+4k} \cong A_{16+4k}$.\\
$\rho_0= (13, 14).$\\
$\rho_1=(12, 13)(14, 15).$\\
$\rho_2= (8, 10)(11, 12).$\\
$\rho_3=(2, 3)(4, 6)(5, 8)(9, 11).$\\
$\rho_4=(1, 2)(3, 5)(4, 7)(6, 9).$\\
$\rho_5= (2, 4)(3, 6).$\\
\end{tabular}
\end{tabular}
\caption{Rank 6 groups $\Gamma^2$ satisfying the conditions of Lemma~\ref{lem:add4} \label{rank6ex}}
\end{figure}


First let us prove that $\Gamma^{t} \cong A_{n}$ for all given families.  To do this, we rely on the fact that if $\Gamma^{t}$ is primitive subgroup of $S_n$ and $\Gamma^{t}$ contains a 3-cycle then  
$\Gamma^{t}  \geq A_n$.  All twelve given families of groups $\Gamma^t$ are constructed so that for all $t \geq 2$
$$(\rho_2 \rho_1 \rho_0 \rho_1)^2$$
 is a 3-cycle.  Thus we will only need to show why the groups are primitive. 
  All twelve given families of groups $\Gamma^t$ are also constructed so that $\langle \rho_0, \rho_1, \rho_2\rangle$ acts as a symmetric group on the orbit of the point $m$.  Therefore, once $k \geq 5$, in all cases $\Gamma^t$ will contain a symmetric group acting on more than half its permutation degree, and thus cannot be imprimitive.   The isomorphisms between $\Gamma^t$ and $A_n$ have been found in \textsc{Magma} for $k \leq 4$.  Therefore, all the given families of groups yield alternating groups, and using Proposition~\ref{lem:add4}, they have been shown to be string C-groups, with the base case of $\Gamma^2$ checked in  \textsc{Magma}.

\end{proof}

We note that for the remaining values of $t$ each of these groups yields a string C-group representation of a symmetric group, as either $\rho_0$ or $\rho_1$ will be odd, and $\Gamma$ will still contain an alternating group.

\section{Main Theorem}\label{sec7}

In this section, we put together all of our results to summarize the relationship between alternating groups and string C-groups.

\begin{theorem}
For all ranks $r \geq 3$, and all $n \geq 2r+1$, if $n \geq 12$, the alternating group $A_n$ has a representation as a rank $r$ string C-group.
\end{theorem}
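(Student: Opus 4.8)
The plan is to dispatch the theorem by a straightforward case analysis on the rank $r$, invoking the results assembled in the previous sections; since all of the substantive constructions have already been carried out, the only genuine task is to check that the two hypotheses $n \geq 2r+1$ and $n \geq 12$ combine to land inside the ranges where each cited result applies. Writing the effective lower bound as $\max(2r+1,\,12)$, I would verify rank by rank that this quantity meets or exceeds the existence threshold of the relevant lemma.

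First I would handle the small ranks. For $r=3$, the hypothesis $n \geq 12$ (together with $n \geq 2r+1 = 7$) gives $n \geq 12 > 9$, so Proposition~\ref{th:r3} immediately yields a rank $3$ representation of $A_n$. For $r \in \{4,5,6\}$ I would note that $\max(2r+1,\,12)$ equals $12$, $12$, $13$ respectively, and that these match the thresholds in Theorem~\ref{lem:low} ($n \geq 12$ for ranks $4$ and $5$, $n \geq 13$ for rank $6$), so the conclusion follows directly. The point worth checking here is that the genuinely excluded small values are precisely the ones ruled out by the constraint $n \geq 2r+1$: for instance $A_{12}$ is not a rank $6$ string C-group, but for $r=6$ the constraint forces $n \geq 13$, so no false claim is made.

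Next I would treat the high-rank case $r \geq 7$, where $2r+1 \geq 15 > 12$, so the binding constraint is simply $n \geq 2r+1$ and the hypothesis $n \geq 12$ is automatic. Here I would split on the parity of $n$, matching the organization of Sections~\ref{sec4} and~\ref{sec5}. For odd $n \geq 2r+1$, Corollary~\ref{lem:OddDone} supplies the representation; writing $n = 2r+1+2t$, the value $t=0$ is handled via~\cite{Alt2}, the odd $t$ give $n = 2r+3+4j$ (Theorem~\ref{th:Steve}), and the even $t \geq 2$ give $n = 2r+5+4j$ (Theorem~\ref{th:Barry}), so every odd $n \geq 2r+1$ is covered. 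For even $n$, the smallest even integer at least $2r+1$ is $2r+2$, so the condition $n \geq 2r+1$ with $n$ even is the same as $n \geq 2r+2$, and Corollary~\ref{lem:EvenDone} provides the representation.

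Since these cases exhaust all $r \geq 3$, the theorem follows. I expect the main (and essentially only) subtlety to be this bookkeeping: confirming that for each fixed rank the quantity $\max(2r+1,\,12)$ never drops below the existence threshold of the corresponding cited result, so that the combined statement neither overclaims — asserting a representation for an $n$ that genuinely has none, such as $A_{12}$ at rank $6$ — nor leaves a gap between the small-rank range and the $r \geq 7$ constructions.
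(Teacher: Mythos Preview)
Your proposal is correct and follows essentially the same route as the paper's own proof: a case split on the rank $r$, with $r=3$ handled by Proposition~\ref{th:r3}, $r\in\{4,5,6\}$ by Theorem~\ref{lem:low}, and $r\geq 7$ by the parity split into Corollaries~\ref{lem:OddDone} and~\ref{lem:EvenDone}. The only cosmetic difference is that the paper separately cites~\cite{LMAlg} for $n\in\{12,13,14\}$, whereas you absorb those into Theorem~\ref{lem:low} (which itself appeals to~\cite{LMAlg} internally); your bookkeeping with $\max(2r+1,12)$ is a clean way to verify that no case is over- or under-claimed.
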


\begin{proof}
The small values of $n=12$, $n=13,$ and $n=14$ follow from~\cite{LMAlg}.  Ranks 3, 4, 5, and 6, were considered in Proposition~\ref{th:r3} and Theorem \ref{lem:low}.  For $r \geq 6$, when $n$ is even this follows from Corollary~\ref{lem:EvenDone}, and for $r \geq 7$ when $n$ is odd it follows from Corollary~\ref{lem:OddDone}.

\end{proof}

\begin{corollary}
For each rank $r$ it is known exactly which alternating groups can be represented as a string C-group of that rank.  Similarly, for each alternating group $A_n$, the exact set of ranks of string C-groups for which it can be represented is known.
\end{corollary}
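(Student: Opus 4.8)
The plan is to treat this corollary as an assembly of already-established results rather than as a fresh argument: the main theorem supplies every existence statement in the generic regime $n \ge 12$, the cited enumerations dispose of the finitely many sporadic degrees $n \le 11$, and \cite{HighestAn} supplies the matching upper bound on the rank. Accordingly, I would prove both assertions at once by separating the generic regime $n \ge 12$ from the sporadic regime $n \le 11$, and then reading the combined information off in each of the two directions.

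First I would dispatch the generic regime, where the situation is uniform. The main theorem realizes $A_n$ as a rank-$r$ string C-group for every $r \ge 3$ with $n \ge 2r+1$, equivalently for every $r$ in the interval $3 \le r \le \lfloor (n-1)/2\rfloor$; conversely, \cite{HighestAn} determines that $\lfloor (n-1)/2\rfloor$ is the maximum rank attained for $n \ge 12$, so no larger $r$ can occur. Hence for each fixed $n \ge 12$ the set of achievable ranks is exactly $\{3,4,\ldots,\lfloor (n-1)/2\rfloor\}$, and dually, for each fixed $r$ the degrees $n \ge 12$ that occur are precisely those with $n \ge 2r+1$. For the sporadic regime I would simply invoke the complete classifications recalled earlier in the paper: \cite{high-rank-alternating} for $n \le 10$ and \cite{LMAlg} for $n = 11$, together with Proposition~\ref{th:r3} and Theorem~\ref{lem:low} for the individual low ranks $3,4,5,6$. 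These enumerations are finite and pin down, for each of $A_5,\ldots,A_{11}$, the exact set of ranks, including the sporadic cases that violate the generic bound, such as the rank-$6$ representation of $A_{11}$. Combining the two regimes then yields, for each rank $r$, the exact set of degrees $n$, and for each degree $n$, the exact set of ranks, which is precisely what the corollary asserts.

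The step I expect to require the most care is not a computation but a completeness check: one must be certain that every pair $(n,r)$ falls into one of the covered cases, and in particular that for fixed $n \ge 12$ the realized ranks form an \emph{unbroken} interval with no omitted value strictly below $\lfloor (n-1)/2\rfloor$. The partition into $n \le 11$ versus $n \ge 12$ settles the first point, and the gap-freeness is exactly the force of the main theorem, which produces all intermediate ranks simultaneously rather than merely the extreme ones. Since no new construction or obstruction is needed beyond these inputs, the proof amounts to verifying that the union of the main theorem's range, the small-degree classifications, and the rank bound of \cite{HighestAn} leaves no pair $(n,r)$ undetermined.
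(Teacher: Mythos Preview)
Your proposal is correct and follows essentially the same approach as the paper: combine the main theorem (existence of all ranks $3 \le r \le \lfloor (n-1)/2\rfloor$ for $n \ge 12$) with the upper bound from \cite{HighestAn}, and handle the finitely many degrees $n \le 11$ via the explicit classifications. If anything, your version is slightly more careful than the paper's, which cites only \cite{HighestAn} and the main theorem without explicitly invoking the small-$n$ enumerations needed to pin down the sporadic cases such as $A_{11}$.
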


\begin{proof}
This follows from the previous theorem in addition to Theorem 1.1 of~\cite{HighestAn}, which states that the highest rank of a string C-group representation of $A_n$ is $3$ if $n=5$, $4$ if $n=9$, $5$ if $n=10$, $6$ if $n=11$ and $\frac{(n-1)}{2}$ if $n \geq 12$.  Moreover, if $n=3, 4, 6, 7$, or $8$, the group $A_n$ is not a string C-group.
 
\end{proof}

\begin{corollary}
The group $A_{11}$ is the only alternating group that has string C-group representations of two ranks $r_1$ and $r_2$, but not all ranks $r_i$ in between $r_1$ and $r_2$.
\end{corollary}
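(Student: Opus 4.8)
The plan is to recognize that this corollary is a direct consequence of the exact classification assembled in the rest of the paper, so the proof reduces to computing, for each $n$, the precise set $R(n)$ of ranks $r$ for which $A_n$ admits a rank-$r$ string C-group representation, and then checking which of these sets fail to be an interval of consecutive integers. First I would record the inputs that pin down $R(n)$: the maximum rank of a representation of $A_n$ (given by Theorem 1.1 of~\cite{HighestAn}, as quoted in the preceding corollary), the low ranks $3,4,5,6$ (from Proposition~\ref{th:r3} and Theorem~\ref{lem:low}), and the main theorem of this section, which supplies all intermediate ranks for large $n$. A group $A_n$ is a counterexample precisely when $R(n)$ has a ``gap,'' that is, when $R(n)$ contains two ranks $r_1 < r_2$ but omits some integer strictly between them.

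For $n \geq 12$ there is no gap. The main theorem provides a rank-$r$ representation whenever $3 \leq r$ and $n \geq 2r+1$, that is, for every integer $r$ with $3 \leq r \leq \lfloor \frac{n-1}{2} \rfloor$, and by the preceding corollary $\lfloor \frac{n-1}{2} \rfloor$ is exactly the largest attainable rank. Hence $R(n) = \{3, 4, \ldots, \lfloor \frac{n-1}{2} \rfloor\}$ is a set of consecutive integers, so no $A_n$ with $n \geq 12$ can be a counterexample. This confines the search to the finitely many values $n \leq 11$.

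It then remains to treat these small cases by hand. For $n \in \{3,4,6,7,8\}$ the group $A_n$ is not a string C-group at all, so $R(n) = \emptyset$; for $n = 5$, Proposition~\ref{th:r3} together with the maximum-rank statement gives $R(5) = \{3\}$; and for $n = 9, 10$ the same results combined with Theorem~\ref{lem:low} give $R(9) = \{3,4\}$ and $R(10) = \{3,4,5\}$. None of these sets has a gap. The decisive case is $n = 11$: Proposition~\ref{th:r3} gives a rank-$3$ representation and the maximum-rank statement gives a rank-$6$ representation, while Theorem~\ref{lem:low} explicitly excludes $n = 11$ from the values admitting rank-$4$ and rank-$5$ representations. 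Therefore $R(11) = \{3,6\}$, which omits the intermediate ranks $4$ and $5$, exhibiting exactly the phenomenon the corollary describes.

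I expect no genuine obstacle here beyond assembling the cited results correctly; the argument is entirely bookkeeping once the classification is in place. The one point deserving care is the $A_{11}$ anomaly itself, whose existence rests precisely on the exclusion of $n = 11$ from the rank-$4$ and rank-$5$ lists in Theorem~\ref{lem:low}. This is what distinguishes $A_{11}$ from its neighbors $A_{10}$, whose rank set $\{3,4,5\}$ is contiguous, and $A_{12}$, whose rank set $\{3,4,5\}$ is likewise contiguous, and thereby singles out $A_{11}$ as the unique alternating group whose set of attainable ranks is not an interval.
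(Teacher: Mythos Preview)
Your argument is correct and is precisely the intended one: compute the set $R(n)$ of attainable ranks for each $n$ from the classification results and observe that $R(11)=\{3,6\}$ is the only non-interval. The paper itself leaves this corollary without a written proof, treating it as an immediate consequence of the preceding corollary, so your explicit case-by-case verification is, if anything, more thorough than what the paper provides.
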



\section{Acknowledgements}
The computations in this paper were completed using Magma \cite{Magma}.  I would like to thank Egon Schulte, Barry Monson, Daniel Pelicer, and Gabe Cunningham for insightful conversations that led to various improvements in this paper.
I would like to thank Dimitri Leemans and Maria Elisa Fernandes for their support throughout this project. Also I would like to thank Dimitri for first suggesting this problem in 2012; this would not have been possible without them.

\bibliographystyle{plain}

\end{document}